\documentclass[a4paper,10pt]{article}

\usepackage{makeidx}
\usepackage{enumerate}
\usepackage{amsmath,amsthm,amssymb,amscd} 

\def\a{{\alpha}}
\def\b{{\beta}}
\def\g{\gamma}
\def\G{\Gamma}
\def\s{\sigma}
\def\l{\lambda}
\def\k{\kappa}
\def\ep{\varepsilon}
\def\vp{\varphi}
\def\th{\theta}
\def\d{\delta}
\def\Th{\Theta}
\def\DD{\Delta}
\def\vr{\varrho}

\def\P{{\mathbb{P}}}
\def\Q{{\mathbb{Q}}}
\def\C{{\mathbb{C}}}
\def\Z{{\mathbb{Z}}}

\def\A{{\mathbb{A}}}
\def\O{{\mathcal{O}}}
\def\K{{\mathbb{K}}}
\def\I{{{\rm Irr}}}
\def\H{{\mathfrak{H}}}
\def\Sc{{\mathcal{S}}}
\def\OO{{\mathfrak{O}}}
\def\U{{\mathbb{U}}}

\def\n{\mathfrak{n}}
\def\m{\mathfrak{m}}
\def\o{\mathfrak{o}}
\def\p{{\mathfrak{p}}}
\def\P{{\mathfrak{P}}}

\def\i{\infty}
\def\ol{\overline}
\def\t{\times}
\def\ot{\otimes}
\def\bt{\boxtimes}
\def\op{\oplus}
\def\rt{\rtimes}
\def\bs{\backslash}
\def\wt{\widetilde}
\def\ra{\rangle}
\def\la{\langle}
\def\im{\imath}
\def\I{{{\rm Irr}}}
\def\ro{{{\rm ord}}}
\def\Ch{{{\rm ch}}}
\def\dg{{{\rm diag}}}
\def\vol{{{\rm vol}}}

\def\bp{\beta_{\psi}}

\def\1{\mathbf{1}}

\theoremstyle{definition}
\numberwithin{equation}{section}
\newtheorem{thm}{Theorem}[section]

\newtheorem{prop}[thm]{Proposition}
\newtheorem{Cor}[thm]{Corollary}
\newtheorem{lem}[thm]{Lemma}
\newtheorem{Rem}[thm]{Remark}

\newtheorem{Conj}{Conjecture}

\setlength{\oddsidemargin}{0mm}
\setlength{\textwidth}{160mm} 
\setlength{\topmargin}{-10mm}
\setlength{\textheight}{240mm}

\title{\bf L-functions of $S_3(\G_2(2,4,8))$}
\author{\bf TAKEO OKAZAKI\footnote{Supported partially by Grant-in-Aid for JSPS Fellows.}}
\date{}

\pagestyle{plain}
\begin{document}
\maketitle

\begin{abstract}
The space of Siegel cuspforms of degree $2$ of weight $3$ with respect to the congruence subgroup $\G_2(2,4,8)$ was studied by van Geemen and van Straten in Math. computation. {\bf 61} (1993).
They showed the space is generated by six-tuple products of Igusa $\th$-constants, and all of them are Hecke eigenforms.
They gave conjecture on the explicit description of the Andrianov $L$-functions.  
In J. Number Theory. {\bf 125} (2007), we proved some conjectures by showing that some products are obtained by the Yoshida lift, a construction of Siegel eigenforms.
But, other products are not obtained by the Yoshida lift, and our technique did not work.
In this paper, we give proof for such products. 
As a consequence, we determine automorphic representations of O(6), and give Hermitian modular forms of SU(2,2) of weight $4$.
Further, we give non-holomorphic differential threeforms on the Siegel threefold with respect to $\G_2(2,4,8)$.
\end{abstract}

\section{Introduction}
The Igusa $\th$-constant for characteristic $m= \{a,b,c,d\} \in \Q^4$ is defined by
\begin{eqnarray*}
\th_m(Z)= \sum_{x \in \Z^2} \exp(2\pi i (x + \frac{(a,b)}{2})Z{}^t(x + \frac{(a,b)}{2}) + (x + \frac{(a,b)}{2}){}^t(\frac{(c,d)}{2}))
\end{eqnarray*}
where $Z$ is an element of $\H_2$, the Siegel upper half space of degree $2$.
This is a Siegel modular form of weight $1/2$, hence taking a six-tuple product of Igusa $\th$-constants, we get a Siegel modular form of weight $3$. 
Let $\G(4) \subset Sp_4(\Z)$ be the principal congruence subgroup of level 4, and 
\begin{eqnarray*}
\G_2(2,4,8) = \G(2,4,8) = \Big\{\left[\begin{array}{cc}
1+4A & 4B\\
4C &1+ 4D
\end{array}\right]\in \G(4) \mid 
{\rm diag} (B) \equiv {\rm diag} (C) \equiv 0, {\rm tr}(A)  \equiv 0 \pmod{2} \Big\}.
\end{eqnarray*}
Let $S_3(\G(2,4,8))$ be the space of Siegel cuspforms of weight $3$ with respect to $\G(2,4,8)$. 
Van Geemen and van Straten \cite{vGS}, using the $\th$-embedding $\G(2,4,8) \bs \H_2 \longmapsto \mathbb{P}^{13}$ given in \cite{vG-N},  showed that 
\begin{eqnarray}
S_3(\G(2,4,8)) = \textstyle\sum_{i=1}^7 \C Sp_4(\Z) F_i + \sum_{j=1}^4 \C Sp_4(\Z) g_j, \label{eqn:decS3G(248)}
\end{eqnarray}
where $F_i, g_j$ are six-tuple products of Igusa $\th$-constants.
Further, they showed that each $F_i,g_j$ belongs to an irreducible cuspidal automorphic representation of $PGSp_4(\A)$.
In particular,
\begin{eqnarray*}
g_1(Z) &:=& \theta_{(0,0,0,0)}(2Z)\theta_{(1,0,0,0)}(Z)\theta_{(0,1,0,0)}(Z)
\theta_{(0,0,1,0)}(Z)\theta_{(0,0,1,0)}(Z)\theta_{(0,0,0,1)}(Z),  \\
g_4(Z) &:=& \theta_{(0,0,0,0)}(2Z)\theta_{(1,0,0,0)}(2Z)\theta_{(0,1,0,0)}(2Z)
\theta_{(0,0,1,0)}(Z)\theta_{(0,0,0,1)}(Z)\theta_{(0,0,1,1)}(Z)
\end{eqnarray*}
are Hecke eigenforms.
By computing some eigenvalues of $g_1,g_4$, they gave:
\begin{Conj}[van Geemen and van Straten \cite{vGS}]
Their Andrianov-Evdokimov $L$-functions are written as 
\begin{eqnarray*}
L(s,g_1;{\rm AE}) &=& L(s,\l), \\
L(s,g_4;{\rm AE}) &=& L(s,(\frac{-2}{*}))L(s-1,(\frac{-2}{*}))L(s,\rho_1),
\end{eqnarray*}
up to the Euler factors at $2$.
Here $\l$ is an automorphic character of $\Q(\sqrt{-1},\sqrt{2})$, and $\rho_1$ is an elliptic cuspform of weight $4$.
\end{Conj}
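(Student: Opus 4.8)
The plan is to realize $g_1$ and $g_4$ through the theta correspondence for the dual pair $(Sp_4,O(V))$, where $V$ is the $6$-dimensional quadratic space over $\Q$ whose Gram data is read off from the six theta characteristics appearing in each product. Because $g_1$ and $g_4$ are genuine six-tuple products of Igusa $\th$-constants (and not four-tuple products, which would fall under the Yoshida, i.e. $O(4)$, case already treated in our earlier paper), the relevant orthogonal space has dimension $6$, and its discriminant field is a quadratic subfield of $\Q(\zeta_8)$ (namely $\Q(\sqrt{-1})$ or $\Q(\sqrt{-2})$ according to the characteristics). First I would fix $V$ precisely, set up the Weil representation and theta kernel for $(GSp_4,GO_6)$, and exhibit each $g_i$ as a theta lift $\th(\s_i)$ of an automorphic representation $\s_i$ of $GO_6(\A)$. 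The accidental isomorphism $\mathrm{Spin}(6)\cong SL_4$ (so that $GO_6$ is governed by a form of $GU(2,2)$ over the discriminant field) is what produces the Hermitian modular forms of $SU(2,2)$ mentioned in the abstract.

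Next I would determine the representation $\s_i$, equivalently the isobaric $GL_4$-representation $\Pi_i$ attached to it under $\mathrm{Spin}(6)\cong SL_4$, whose standard (degree $4$) $L$-function is the Andrianov--Evdokimov spin $L$-function of $g_i$. For $g_1$ the representation is of CM type: $\Pi_1=\mathrm{AI}_{\Q(\zeta_8)/\Q}(\l)$ is the automorphic induction of a Hecke character $\l$ of the biquadratic field $\Q(\sqrt{-1},\sqrt{2})=\Q(\zeta_8)$, so that $L(s,\Pi_1)=L(s,\l)$; here the key is to pin down $\l$ by matching its infinity type to the weight $3$ and its conductor to the level $(2,4,8)$. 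For $g_4$ the representation is non-tempered (CAP with respect to the Siegel parabolic, of twisted Saito--Kurokawa type): $\Pi_4=\chi\boxplus\chi|\cdot|\boxplus\rho_1$, where $\chi=(\frac{-2}{*})$ and $\rho_1$ is the weight $4$ elliptic newform, whence the degenerate factorization $L(s,\Pi_4)=L(s,\chi)L(s-1,\chi)L(s,\rho_1)$, the shift by $1$ reflecting the non-tempered summand.

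To prove the $L$-function identities I would then match Satake parameters at every good prime $p\ne 2$: using Howe duality and the explicit unramified theta correspondence (Satake bookkeeping under $\mathrm{Spin}(6)\cong SL_4$), the spin parameters of $g_i$ at $p$ are computed from those of $\s_i$ and compared with the Hecke eigenvalues of van Geemen and van Straten, extended as needed. Since the claim is only asserted up to Euler factors at $2$, restricting to $p\ne 2$ is harmless. Finally, to know that $g_i$ really lies in the image I would establish non-vanishing of the global theta lift via the Rallis inner product formula (reducing to non-vanishing of $L(s,\Pi_i)$ at the relevant point together with local non-vanishing), and then invoke strong multiplicity one for $GSp_4$ --- or the explicit description \eqref{eqn:decS3G(248)} of $S_3(\G(2,4,8))$ --- to upgrade the prime-by-prime agreement to the full identity.

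The main obstacle will be the analysis of $g_4$. Identifying the non-tempered CAP representation and proving that its spin $L$-function degenerates exactly into $L(s,\chi)L(s-1,\chi)L(s,\rho_1)$ --- with the correct quadratic twist $\chi=(\frac{-2}{*})$ forced by $\mathrm{disc}(V)$ and the level structure --- is delicate, since non-tempered theta lifts require a separate non-vanishing argument and one must control the archimedean component so that precisely the holomorphic weight $3$ vector (rather than a non-holomorphic or lower-weight vector, which is also where the differential threeforms of the abstract enter) survives. For $g_1$ the CM structure keeps the computation transparent, but determining $\l$ exactly, and verifying that $L(s,\l)$ is attached to the full field $\Q(\zeta_8)$ rather than a quadratic subfield, still demands care at the prime $2$ --- which is exactly where the conjecture permits a discrepancy.
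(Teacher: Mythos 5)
Your governing premise---that a six-tuple product of $\th$-constants forces the $O(6)$ dual pair---is factually off: \emph{every} generator $F_i,g_j$ in (\ref{eqn:decS3G(248)}) is a six-tuple product, and the paper proves the conjecture for both $g_1$ and $g_4$ entirely within the GO(4) correspondence. For $g_1$ it takes a four-dimensional $X$ with $d_X=2$, so that $GSO_X$ comes from a quaternion algebra over $L=\Q(\sqrt{2})$, and lifts the representation $\pi(\l)$ of $GL_2(L_{\A})$ attached to $\l$ through the identity $\l_{\P}=\nu_{\p}\circ N_{K/L}$ (the D-critical case, with Roberts' Theorem \ref{thm:Roberts} passing between the generic lift and the holomorphic Yoshida lift); for $g_4$ it lifts the non-cuspidal pair $(\chi_{-2},\rho_1)$ and invokes the Saito--Kurokawa packet theory of Cogdell--Piatetski-Shapiro and Schmidt, after first proving that $\Pi^{(4)}$ is CAP using the tabulated eigenvalues (\ref{eqn:noRam}), Weissauer's purity theorem, and Soudry's classification. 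The GO(6) pair is reserved, in the companion work mentioned in the introduction, for $F_7,g_2,g_3$. Your choice of dual pair is not absurd in itself, but it makes $g_4$ strictly harder: you must first show that a non-tempered CAP representation occurs at all in the correspondence with an anisotropic $GSO_V$, and neither the Rallis inner product formula (whose non-vanishing criteria are calibrated for tempered inputs) nor anything else in your sketch supplies this; note also that by the criterion of Theorem \ref{thm:main2}, failure of the Ramanujan bound is exactly the obstruction to cuspidality on the $O(6)$ side.

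The decisive gap, however, is your identification step. Strong multiplicity one fails for $GSp_4$---precisely for CAP representations such as $\Pi^{(4)}$, whose global packet has two constituents agreeing at almost all places---so it cannot be invoked. Worse, you have no mechanism to establish ``Satake matching at every good prime'' in the first place: van Geemen--van Straten's table gives Hecke eigenvalues only at finitely many primes ($p=3,5,7,\dots$), and agreement at finitely many primes determines nothing by itself. The only available bridge, and the technical heart of the paper, is to construct inside the candidate theta lift an explicit vector invariant under $\G(2,4,8)_2\times\prod_{p\neq2}GSp_4(\Z_p)$: this is Proposition \ref{prop:defN} for $g_1$ and the $P$-degenerate Whittaker computation (\ref{eqn:degWP}) for $g_4$, both carried out by hand at $p=2$. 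Only with that vector in hand does the lift necessarily contain one of the eleven $Sp_4(\Z)$-orbits of (\ref{eqn:decS3G(248)}), and only then does the finite eigenvalue check at $p=3,5,7$ suffice to single out the orbit of $g_1$ (resp. $g_4$). Your plan never addresses the level structure at $2$, i.e. never constructs the $\G(2,4,8)_2$-fixed vector, so your appeal to (\ref{eqn:decS3G(248)}) has no force and the argument cannot close.
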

We prove 
\begin{thm}\label{thm:mthm1}
The conjecture is true.
\end{thm}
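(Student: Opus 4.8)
The plan is to realize the cuspidal automorphic representations $\pi_{g_1},\pi_{g_4}$ of $PGSp_4(\A)$ attached to $g_1,g_4$ as theta lifts for the dual pair $(GSp_4,GO_6)$, and to read off the Andrianov--Evdokimov (spin) $L$-function from the source datum. The structural input is the exceptional isogeny ${\rm Spin}_6\cong SL_4$, in the real form ${\rm Spin}(4,2)\cong SU(2,2)$ that governs holomorphic forms: an automorphic representation $\s$ of $GO_6(\A)$ corresponds, through $GU(2,2)$, to an isobaric automorphic representation $\tau$ of $GL_4(\A)$, and when the theta lift $\Theta(\s)=\pi$ to $GSp_4$ is nonzero its degree-$4$ spin $L$-function agrees with the degree-$4$ standard $L$-function of $\tau$. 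Thus at every finite place $v\nmid 2$ one obtains $L(s,\pi_{g_i,v};{\rm AE})=L(s,\tau_{i,v})$, and the theorem reduces to (i) exhibiting $\pi_{g_1},\pi_{g_4}$ as such theta lifts and (ii) identifying the two sources $\tau_1,\tau_2$ on $GL_4$ and computing their standard $L$-functions. This is precisely where the Yoshida construction, i.e. the theta lift from the smaller group $GO(2,2)\cong(GL_2\t GL_2)/GL_1$, fails to reach $g_1,g_4$: such a lift would force the spin $L$-function to factor as a product of two $GL_2$ $L$-functions, which is not the shape predicted by the conjecture.

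For $g_1$ the source is a CM datum. Writing $E=\Q(\zeta_8)=\Q(\sqrt{-1},\sqrt{2})$, the predicted identity $L(s,g_1;{\rm AE})=L(s,\l)$ is, away from $2$, the Hecke $L$-function of a single idele-class character $\l$ of the quartic CM field $E$. Since automorphic induction satisfies $L(s,{\rm AI}_{E/\Q}(\l))=L(s,\l)$, I would take $\tau_1={\rm AI}_{E/\Q}(\l)$, which is cuspidal provided $\l$ is not fixed by any nontrivial element of ${\rm Gal}(E/\Q)$ and is self-dual of the type that arises from $GO_6$; on the $SU(2,2)$-side this is the weight-$4$ Hermitian CM modular form announced in the abstract. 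Granting nonvanishing of the lift and the correct infinity-type, $\Theta(\s_1)$ is a cuspidal holomorphic Siegel eigenform of scalar weight $3$ and level dividing $\G(2,4,8)$ with $L(s,\Theta(\s_1);{\rm AE})=L(s,\l)$ away from $2$. It then remains to see that $\Theta(\s_1)$ is $g_1$ itself, which is forced by multiplicity one together with the coincidence of the finitely many Hecke eigenvalues already tabulated in \cite{vGS}.

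For $g_4$ the source is non-cuspidal. The predicted factor $L(s,(\tfrac{-2}{*}))L(s-1,(\tfrac{-2}{*}))L(s,\rho_1)$ is the standard $L$-function of the self-dual isobaric representation $\tau_2=\pi_{\rho_1}\boxplus(\chi|\cdot|^{1/2})\boxplus(\chi|\cdot|^{-1/2})$ of $GL_4(\A)$, where $\chi=(\tfrac{-2}{*})$ and $\pi_{\rho_1}$ is the (unitary) automorphic representation of the weight-$4$ cuspform $\rho_1$; in the classical normalization of \cite{vGS} the two characters produce the unit shift visible in $L(s-1,\chi)$. Because $\tau_2$ is non-tempered, its theta lift to $GSp_4$ is a \emph{CAP} (cuspidal-associated-to-parabolic) representation of twisted Saito--Kurokawa type: the weight-$4$ elliptic form $\rho_1$ is carried to a weight-$4$ Hermitian modular form on $SU(2,2)$ and thence to the weight-$3$ Siegel cuspform $g_4$. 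As before the spin $L$-function equals the standard $L$-function of $\tau_2$ away from $2$, namely the product in the conjecture, and identifying $\Theta(\s_2)$ with $g_4$ is again settled by multiplicity one and the eigenvalues of \cite{vGS}.

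The main obstacle is step (i): proving that the relevant theta lifts are nonzero and produce exactly $g_1,g_4$ rather than some other eigenform of the same level and infinity-type. For $g_1$ this is the nonvanishing and archimedean-weight computation of a cuspidal theta lift; for $g_4$ it is more delicate, since the source $\s_2$ is non-cuspidal, so the theta integral must be regularized and one must prove that the lift is cuspidal and nonzero. The weight $3$ sits at the boundary where the pertinent archimedean representations are limits of holomorphic discrete series, which makes both the infinity-type matching and the nonvanishing genuinely subtle. A secondary difficulty is the prime $2$: the level $\G(2,4,8)$ concentrates all ramification at $2$, and although the conjecture only asserts the $L$-function away from $2$, one still needs enough control of $\pi_{g_i,2}$ and $\tau_{i,2}$ to guarantee that the global lift lands in $S_3(\G(2,4,8))$. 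I expect the cleanest way to avoid a full local analysis at $2$ to be a rigidity argument: construct the candidate lifts $\Theta(\s_i)$, verify that each is a holomorphic weight-$3$ Siegel eigenform of level $\G(2,4,8)$ with the predicted $L$-function away from $2$, and then invoke strong multiplicity one inside the explicitly known space \eqref{eqn:decS3G(248)}, together with the computed eigenvalues of \cite{vGS}, to pin the lift down as $g_1$ (respectively $g_4$).
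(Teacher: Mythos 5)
Your proposal has the right top-level strategy (produce a candidate automorphic representation with the conjectured $L$-function, then identify it with $g_i$ inside the explicitly decomposed space (\ref{eqn:decS3G(248)}) via the tabulated eigenvalues), but it has two genuine gaps, one fatal as written. The fatal one is the prime $2$. You say you expect to ``avoid a full local analysis at $2$'' by a rigidity argument, but the rigidity argument cannot even begin until you have shown that the candidate lift contains a vector invariant under $\G(2,4,8)_2$, i.e.\ that it contributes to $S_3(\G(2,4,8))$ at all; only then does the finite list of eleven $Sp_4(\Z)$-orbits become available for eigenvalue comparison. This step cannot be replaced by ``predicted $L$-function away from $2$ plus finitely many matching eigenvalues,'' because strong multiplicity one fails for $GSp_4$ --- indeed $g_4$ itself is CAP, exactly the situation where weak equivalence does not determine the representation. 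Controlling the level at $2$ is precisely the technical heart of the paper's proof: for $g_1$, Proposition \ref{prop:defN} and the explicit Schwartz function $\vp^{(1)}$, with a cancellation analysis of the i)- and ii)-type cosets in the Whittaker integral, produce a $\G(2,4,8)_2$-semi-invariant Whittaker function; for $g_4$, an analogous computation with a local $P$-degenerate Whittaker function and the group $\K^{(4)}$ does the same job. Your plan lists ``verify the lift has level $\G(2,4,8)$'' as a step but supplies no technique for it, and there is no soft argument that does.

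The second gap is that the machinery you invoke is much heavier than what exists, and is not what the paper uses. The paper never lifts from $GO(6)$: for $g_1$ it observes that $K=\Q(\zeta_8)$ is quadratic over $L=\Q(\sqrt2)$, so $\l$ gives a Hilbert modular representation $\pi(\l)$ of $GL_2(L_\A)$, and uses the $GO(4)$ correspondence in the case $d_X\notin(\Q^\t)^2$ (where $GSO_X$ is built from $B\ot L$); Theorem \ref{thm:Roberts} then transfers the nonarchimedean local data of the generic lift $\Th_2(\pi(\l))$ to the holomorphic Yoshida lift $\Th_2(\pi(\l)')$. Your route instead requires descent between $GL_4(\A)$ (or $GU_{2,2}$ over $\Q(\sqrt{-2})$) and the anisotropic $GO_6$, and, for $g_4$, a regularized theta lift of a \emph{non-cuspidal} isobaric representation together with proofs of cuspidality and nonvanishing --- none of this is off the shelf, and you do not indicate how to supply it. The paper's treatment of $g_4$ runs in the opposite logical direction and avoids all of it: the tabulated eigenvalues violate Ramanujan, Weissauer's Galois representations then force $\Pi^{(4)}$ to be CAP, Soudry's classification plus the archimedean weight $(3,3)$ rule out Klingen- and Borel-CAP, so $\Pi^{(4)}$ is a twist of a Saito--Kurokawa lift; Schmidt's packet theory (with the root number computation $L(\frac12,\chi_{-2}\rho_1)=0$) and the local computation at $2$ then pin it down. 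Note also that your stated reason why the Yoshida route ``fails'' for $g_4$ is wrong: its conjectured $L$-function \emph{does} factor into two degree-two pieces, one of them Eisenstein, and the paper reaches $g_4$ exactly by the $GO(4)$-type lift of the degenerate pair $(\chi_{-2},\rho_1)$; what fails is only the lift of a \emph{cuspidal} pair, because $g_4$ is non-tempered. Finally, the $SU(2,2)$ forms of weight $4$ appear in the paper as a \emph{consequence} (Theorem \ref{thm:main2}), obtained by lifting from $GSp_4$ to $GO(6)$, not as an input.
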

Hence $g_1$ is in the {\it D-critical case} in the sense of Weissauer \cite{W}.
On the other hand, $g_4$ is $(\frac{-2}{*})$-twist of a {\it Saito-Kurokawa lift} of $\rho_1 \ot (\frac{-2}{*})$.
Van Geemen and Nygaard \cite{vG-N} studied a quotient $Y$ of $\G(2,4,8)\bs \H_2$ and calculated its Hodge numbers $h^{3,0} = h^{2,1} = 1$ and determined the Hasse-Weil zeta function of the third etale cohomology of $Y$.
Combining this with our result in \cite{O2}, we have
\begin{eqnarray*}
L(s,H_{{\rm et}}^3(Y,\Q_2)) = L(s,F_5;{\rm AE})
\end{eqnarray*}
and an interest in the non-holomorphic differential threeform on $Y$.
Motivated to this reason, we give a non holomorphic automorphic form $F_5'$ which defines a differential threeform on $\G(2,4,8) \bs \H_2$.
$F_5$ and $F_5'$ are so-called {\it weak endoscopic lifts} belonging to the same $L$-packet.
All of these representations, weak endoscopic lift, Saito-Kurokawa lift, D-critical case, are familiar with us and appear in the $\th$-correspondence for GSp(4) and GO(4).
Indeed, we will prove the above conjecture in section 3 by using the $\th$-correspondence.
In \cite{O2}, we have proved their conjectures on $L(s,F_i;{\rm AE})$ for $1 \le i \le 6$.
In another work in preparation, by using the $\th$-correspondence for GSp(4) and GO(6), we will show their conjectures for $F_7$ and $g_3$ are true, and give an answer for $g_2$.
Combining all these works, we will complete the proof for their conjectures.

By the way, our result means that there are irreducible automorphic cuspidal representations of GSO(6) corresponding to these familiar representations of GSp(4).
We show the following theorem in section 4 and hence we find holomorphic Hermitian modular forms of SU(2,2) of weight $4$ from the Siegel modular forms of weight $3$.
Notice a holomorphic Hermitian cuspform of SU(2,2) of weight $4$ is canonically identified with a holomorphic differential fourform on a fourfold modular variety, as well as the Siegel cuspforms of degree $2$ of weight $3$.  
\begin{thm}\label{thm:main2}
Let $V$ be an anisotropic six dimensional space defined over $\Q$ with discriminant $d$.
Suppose that a Siegel eigen cuspform $F$ of degree $2$ of weight $3$ appears in the $\th$-correspondence for $GSp(4)$ and $GSO_V$.
Then, there is a holomorphic Hermitian form $\wt{F}$ of $SU_{2,2}(\Q(\sqrt{-d}))$ of weight $4$ with 
\begin{eqnarray}
L(s,\wt{F}) = \zeta(s)L(s,F,(\frac{-d}{*});r_5), \label{eqn:zetaofwtF}
\end{eqnarray}
outside of finitely many bad places.
Here $L(s,\wt{F})$ is the standard Langlands $L$-function of $\wt{F}$ on $SO_{V}(\A) (\simeq SU_{2,2}(\Q(\sqrt{-d})_{\A}))$, and $L(s,F,(\frac{-d}{*});r_5)$ is the $(\frac{-d}{*})$-twist of the $L$-function of degree five ($r_5$ indicates the degree).
If $F$ satisfies the generalized Ramanujan conjecture at almost all good places, then $\wt{F}$ is a cuspform.
\end{thm}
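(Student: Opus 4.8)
The plan is to run the exceptional isomorphism $\mathrm{Spin}_6\cong SU_4$ in families and transport the given theta lift across it. First I would fix $E=\Q(\sqrt{-d})$ and use that, for a six-dimensional anisotropic quadratic space $V$ of discriminant $d$, the exceptional isomorphism identifies, up to isogeny, $SO_V$ with the special unitary group $SU(W)$ of a four-dimensional $E$-Hermitian space $W$, and $GSO_V$ with the corresponding similitude group $GU(W)$. As an isomorphism of $\Q$-groups this gives the adelic identification $SO_V(\A)\simeq SU_{2,2}(E_\A)$ used in the statement. Let $\pi_F$ be the cuspidal automorphic representation of $GSp_4(\A)$ generated by $F$; by hypothesis its theta lift $\sigma=\th(\pi_F)$ to $GSO_V(\A)$ is nonzero, and transporting $\sigma$ across the isomorphism yields an automorphic representation $\wt\sigma$ of $GU(W)(\A)$ whose restriction $\wt\pi$ to $SU(W)(\A)=SU_{2,2}(E_\A)$ is what I would analyze.

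Next I would determine the archimedean component and the weight. The component $\pi_{F,\i}$ is the scalar holomorphic discrete series of $Sp_4(\R)$ of weight $3$, and I would compute its local theta lift to $SO(V_\i)$ using the explicit correspondence for holomorphic (limits of) discrete series between $Sp_{2n}(\R)$ and $O(p,q)$, tracking the minimal $K$-type. The effect is a shift of scalar weight $3\mapsto 4$, so that $\wt\sigma_\i$ is the lowest-weight (holomorphic) representation of scalar weight $4$, and its lowest-weight vector, paired with suitable arithmetic vectors at the finite places, produces $\wt F$. Since $V$ anisotropic forces $V_\i$ to be definite and hence $SU(W)(\R)$ compact, I would then pass from this inner form to the quasi-split $SU_{2,2}$ by a Jacquet--Langlands-type transfer (via the Arthur/Mok classification for unitary groups); this is what makes the phrase ``holomorphic Hermitian modular form of $SU_{2,2}$ of weight $4$'' literally meaningful, and the transfer preserves the near-equivalence class and hence all $L$-factors at good places.

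For the $L$-function I would match Satake parameters at every good prime. Under $\mathrm{Spin}_6\cong SL_4$ the six-dimensional standard representation of $SO_V$ is $\wedge^2$ of the standard four-dimensional representation, and the theta transfer of parameters sends the degree-five standard parameter $\phi_5$ of $\pi_F$ (the parameter through $SO_5(\C)$, i.e.\ the one computing $r_5$) to the degree-six parameter $\phi_6=\mathbf{1}\oplus\bigl(\phi_5\otimes(\tfrac{-d}{*})\bigr)$, the quadratic twist and the extra trivial summand reflecting the discriminant $E$ of $V$ and the embedding $SO_5\hookrightarrow SO_6$. I would verify this unramified identity directly from the explicit Satake parameter of the local theta lift, and where a cleaner argument is wanted read off the standard degree-six $L$-factor of $\wt\sigma_p$ by the doubling (Piatetski-Shapiro--Rallis) method. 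Taking the product over good places gives
\begin{eqnarray*}
L(s,\wt F) = \zeta(s)\, L(s,F,(\tfrac{-d}{*});r_5)
\end{eqnarray*}
outside the bad places, the factor $\zeta(s)$ coming from the trivial summand $\mathbf{1}$.

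Finally, for cuspidality: on the anisotropic inner form cuspidality is automatic, since $SU(W)(\Q)\bs SU(W)(\A)$ is compact, so the real content is that the transfer of $\wt\pi$ to the quasi-split $SU_{2,2}$ lands in the cuspidal rather than the residual or CAP spectrum. Assuming $F$ satisfies the Ramanujan bound at almost all good places makes $\wt\pi$ tempered there, which excludes a residual or CAP parameter and forces $\wt F$ to be a genuine cusp form. The main obstacle I anticipate is the archimedean step together with this transfer: establishing precisely that the holomorphic weight-$3$ input produces the scalar weight-$4$ holomorphic representation, and controlling the Jacquet--Langlands correspondence for $SU_{2,2}$ well enough to conclude both holomorphy and cuspidality, is considerably more delicate than the essentially formal Satake bookkeeping behind the $L$-function identity.
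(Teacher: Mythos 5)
Your outline diverges from the paper at exactly the step you yourself flag as delicate, and that step contains a genuine gap. The paper never invokes a Jacquet--Langlands/Arthur--Mok transfer from the anisotropic form to the quasi-split group. Instead, after using Knus's isomorphism $J: GU_4(K)\simeq GSO_V$ (your exceptional isomorphism), it performs a \emph{second global theta lift}, from the compact unitary group $U_4(K)$ to $U_{2,2}(K)$: the Hermitian form $\wt F$ is by definition an element of $\Th_{2,2}(\s|_U)$. This construction settles for free the two points your transfer cannot: every automorphic form in $\Th_{2,2}(\s|_U)$ is holomorphic of weight $4$, because the lift is from the definite form $U(4)$, and nonvanishing is proved concretely (the Proposition in Section 4) by tracing a nonzero Fourier coefficient $F_T$ of the Siegel cusp form $F$ to a nonvanishing period of $f\in\s$ over the rank-four stabilizer $Z_{x_1,x_2}$, then over a subgroup $U_W\simeq U_2(K)$ inside it, which in turn produces a nonzero Fourier coefficient $F'_Y$ of some $F'\in\Th_{2,2}(\s|_U)$ (citing Yoshida).

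By contrast, your route leaves the central existence statement unproved. A ``Jacquet--Langlands-type transfer'' for unitary groups controls only the near-equivalence class, i.e.\ the Satake parameters at good places. Which members of the archimedean packet actually occur in the discrete spectrum of the quasi-split $SU_{2,2}$ is governed by the multiplicity formula, and the parameter here, $\mathbf{1}\oplus\big(\phi_5\otimes(\frac{-d}{*})\big)$, is visibly reducible, hence endoscopic and non-stable, so the sign conditions are non-vacuous: nothing in your argument shows that the \emph{holomorphic} weight-$4$ member of the packet appears automorphically rather than only non-holomorphic members. Note also that $\wt\s_\i$ on the compact form is finite-dimensional, so calling it ``the lowest-weight representation of scalar weight $4$'' already presumes the outcome of the transfer you are trying to control; and the inner-form classification you would need was in any case far beyond what the paper uses. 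Since existence of a holomorphic weight-$4$ cusp-eligible $\wt F$ is precisely the content of the theorem, this is not a technicality. The rest of your sketch does match the paper: the Satake identity $\phi_6=\mathbf{1}\oplus(\phi_5\otimes\chi_K)$ is the paper's appeal to Kudla's unramified theta correspondence, combined with the fact that the equal-size lift $U(4)\to U(2,2)$ preserves standard $L$-factors; and for cuspidality the paper argues more elementarily than you do, showing via the Siegel operator and a Zharkovskaya-type factorization that a holomorphic noncuspform of weight $4$ would have an $L$-function containing a weight-$1$ character twist, which is incompatible with the tempered shape forced by the Ramanujan hypothesis, with no appeal to a classification of the discrete spectrum.
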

This result is considered as a special case of an analogy of the generic transfer lift from GSp(4) to GL(4) ($\simeq$ GSO(3,3)).
Since the Siegel cuspforms $F_4,F_5,F_6$ which are weak endoscopic lifts, and $g_1$ which is in the D-critical case satisfy the generalized Ramanujan conjecture, we obtain cuspforms $\wt{F}_4,\wt{F}_5,\wt{F}_6$ of $SU_{2,2}(\Q(\sqrt{-1}))$ and $\wt{g}_1$ of $SU_{2,2}(\Q(\sqrt{-2}))$.
On the other hand, by the transfer lift, a weak endoscopic lift of GSp(4) is lifted to a noncuspidal representation of GL(4) (c.f \cite{Asgari-Shahidi}). \vspace{2mm}\\
{\bf ACKNOWLEDGEMENT.}
I express my thanks to Professor R. Salvati Manni for suggesting to solve the conjectures, and to Professor T. Ibukiyama and Professor T. Yamauchi for their kind advice and encouragement. \\
NOTATION.
If $G$ is a reductive algebraic group defined over a number field $F$, $\I(G(\A))$ denotes the set of the equivalence classes of irreducible automorphic representations of $G(\A)$.
At a place $v$ of $F$, let $\I(G(F_v))$ be the set of equivalence classes of irreducible admissible representations of $G(F_v)$.

\section{Preliminaries}
\subsection{$\th$-correspondence for GO(4) and GSp(4)}
In this section, we summarize the $\th$-correspondence for GO(4) and GSp(4).
Let $R$ be a ring, and 
\[
\eta_{n} = \left[\begin{array}{cc}
 & -I_n\\
I_n &
\end{array}\right],
\]
then
\[
GSp_{2n}(R) = \{g \in GL_{2n}(R) \mid {}^{t}g \eta_{n} g = pf(g) \eta_{n} \}, Sp_{2n}(R) = SL_{2n}(R) \cap GSp_{2n}(R).
\]
We call $pf(g)$ the similitude norm of $g$.
Let $X_{/\Q}$ be a four dimensional space defined over $\Q$ with a nondegenerate quadratic form $(\ ,\ )$.
Let $d_X$ be the discriminant of $X$.
We fix the standard additive character $\psi$ on $\Q \bs \A$.
Let $\Sc(X(\Q_v)^n)$ be the space of Schwartz-Bruhat functions of $X(\Q_v)^n$.
The Weil representation $r_v^n$ of $Sp_{2n}(\Q_v) \t O_X(\Q_v)$ defined with respect to $\psi_v$ is the unitary representation on $\Sc(X(\Q_v)^n)$ given by 
\begin{eqnarray}
r_v^n(1,h)\vp_v(x) &=& \vp_v(h^{-1}x), \label{eqn:weilprop1}\\
r_v^n\Big(\left[\begin{array}{cc}
a & 0 \\
0 & {}^ta^{-1}
\end{array}\right],1\Big)\vp_v(x) &=& \chi_X(\det a)|\det a|^{2}\vp_v(xa),\label{eqn:weilprop2} \\
r_v^n\Big(\left[\begin{array}{cc}
I_n & b \\
 0& I_n
\end{array}\right],1\Big)\vp_v(x) &=& \psi_v\Big(\frac{{\rm tr}(b(x,x))}{2}\Big)\vp_v(x), \label{eqn:weilprop3} \\
r_v^n\Big(\left[\begin{array}{cc}
0 & -I_n \\
I_n & 0
\end{array}\right],1\Big)\vp_v(x) &=& \g \vp_v^{\vee}(x). \label{eqn:weilprop4}
\end{eqnarray}
Here $\chi_X(*)= \{*, d_X\}_v$ with the Hilbert symbol $\{*,*\}_v$.
The Weil constant $\g$ is a fourth root of unity depending on the anisotropic kernel of $X,n$ and $\psi$.
The Fourier transformation $\vp^{\vee}$ of $\vp$ is defined by
\begin{eqnarray*}
\vp^{\vee}(x) = \int_{X(\Q_v)^n} \psi_v({\rm tr}(x,y))\vp(y) d y
\end{eqnarray*}
where $d x$ is the self-dual Haar measure.
For our later use, we only consider irreducible cuspidal automorphic representations of $PGSO_X(\A)$.
Let $\s$ be such a representation.
Take $\vp = \ot_v \vp_v$ and $f \in \s$.
Let $r^n = \ot_v r_v^n$.
Let $d h$ be a Haar measure on $SO_X(\Q) \bs SO_X(\A)$.
Then, the integral
\begin{eqnarray}
\th_n(\vp,f)(g) := \int_{SO_X(\Q) \bs SO_X(\A)} \sum_{x \in X(\Q)^n} r_v^n(g,h)\vp(x)f(h) dh \label{eqn:integal}
\end{eqnarray}
is absolutely convergent, and an automorphic form on $Sp_{2n}(\A)$.
By the extended Weil representation as in \cite{Ro2}, $\th_n(\vp,f)(g)$ is extended to an automorphic form on $PGSp_{2n}(\A)$.
We will denote by $\Th_n(\s)$ the space spanned by such automorphic forms on $PGSp_{2n}(\A)$.
We separate the explanation of the $\th$-correspondence for $PGO_X$ to $PGSp_4$, according to $d_X$.

First, suppose that $d_X \in (\Q^{\t})^2$.
In this case, $X$ is isometric to a quaternion algebra $B_{/\Q}$ defined over $\Q$ with the reduced norm.
Let $k$ denote $\Q$, $\Q_v$ or $\A$.
We define the right action $\rho(h_1,h_2)x = h_1^{-1} x h_2$ for $x \in B(k), h_i \in B(k)^{\t}$.
By $\rho$, we have the isomorphisms
\begin{eqnarray}
i_{\rho} &:& GSO_X(k) \simeq B(k)^{\t} \t B(k)^{\t}/\DD k^{\t}, \nonumber \\
& & SO_X(k) \simeq \{(b_1,b_2) \in B(k)^{\t} \t B(k)^{\t} \mid N_{B/k}(b_1) = N_{B/k}(b_2)\}/\DD k^{\t}, \label{eqn:isom1}
\end{eqnarray}
where $\DD k^{\t}$ denotes the diagonal embedding into $B(k)^{\t} \t B(k)^{\t}$.
Hence we can identify a cupidal pair $(\s_1, \s_2)$ of $\I(PB(\A)^{\t})$ with an irreducible cuspidal automorphic representation of $PGSO_X(\A)$.
Further, $\Th_2((\s_1,\s_2)) = \ot_v \th_2((\s_{1v},\s_{2v}))$ is irreducible.
For an cuspidal $\s \in \I(B(\A)^{\t})$, we will denote the Jacquet-Langlands lift of $\s$ by  by $\s^{\rm JL} \in \I(GL_2(\A))$.
If an irreducible cuspidal automorphic representation $\Pi = \ot_v \Pi_v$ of $PGSp_4(\A)$ has
\begin{eqnarray*}
L_S(s,\Pi;{\rm spin}) \big(:= \prod_{v \not\in S} L(s,\Pi_v;{\rm spin}) \big) = L_S(s,\pi_{1})L_S(s,\pi_{2})
\end{eqnarray*}
for a finite set $S$ of places, and a cuspidal pair $(\pi_1,\pi_2)$ of $\I(PGL_2(\A))$, we say $\Pi$ is a weak endoscopic lift of $(\pi_1,\pi_2)$.
In particular, $\Th_2((\s_1,\s_2))$ is a weak endoscopic lift of $(\s_1^{\rm JL},\s_2^{\rm JL})$.
Take $f_1 \in \s_1,f_2 \in \s_2$.
Then (\ref{eqn:integal}) is written as
\begin{eqnarray*}
\th_2(\vp,f_1 \bt f_2)(g) = \int_{SO_X(\Q)\bs SO_X(\A)} \sum_{x \in B(\Q)^2} r^2(g,i_{\rho}(h_1,h_2)) \vp(x)\ol{f}_1(h_1)f_2(h_2) dh_1dh_2.
\end{eqnarray*}
In the case that $B_{/\Q}$ is a definite quaternion algebra, we say $\Th_2((\s_1,\s_2))$ is the Yoshida lift of $(\s_1,\s_2)$, whose archimedean component is holomorphic.
In the case that $B_{/\Q} \simeq M_2(\Q)$, $\Th_2((\s_1,\s_2))$ is globally generic, i.e., there is $F \in \Th_2((\s_1,\s_2))$ which has a nontrivial global Whittaker function.
In general, a global Whittaker function of an automorphic form $F$ on $GSp_4(\A)$ associated to $\psi$ is defined by 
\begin{eqnarray}
W_{F,\psi}(g) = \int_{(\Q \bs \A)^4}\psi(-t+s_4)F(\left[\begin{array}{cccc}
1 &t& & \\
 & 1& & \\
 & & 1& \\
 & & -t& 1
\end{array}\right]
\left[\begin{array}{cccc}
1 & &s_1 &s_2 \\
 & 1&s_2 & s_4\\
 & & 1& \\
 & & &1
\end{array}\right]
g) d t ds_1ds_2ds_4, \label{eqn:derWhitt}
\end{eqnarray}
and decomposed to $\ot_v W_{F,\psi_v}$.
We are going to give $W_{F,\psi_v}$ of $F=\th_2(\vp,f_1\bt f_2)$.
Let 
\begin{eqnarray*}
e = \left[\begin{array}{cc}
 & 1 \\
 & 
\end{array}
\right], \ 
\a = \left[\begin{array}{cc}
1&  \\
 & -1
\end{array}
\right] \in X(\Q) = M_2(\Q).
\end{eqnarray*}
The stabilizer subgroup $Z_{e,\a}(k) \subset SO_X(k)$ of $e,\a$ is isomorphic to 
\begin{eqnarray*}
\bigg\{\big(\left[\begin{array}{cc}
1 & s \\
 & 1
\end{array}
\right], 
\left[\begin{array}{cc}
1 & s \\
 & 1
\end{array}
\right]\big) \mid s \in k \bigg\}
\end{eqnarray*} 
by $i_{\rho}$.
Let $\b_{1,\psi} = \ot_v \b_{1, \psi_v} ,\b_{2,\psi} = \ot_v \b_{2,\psi_v}$ be the Whittaker functions of $f_1,f_2$ with respect to $\psi$.
Then,
\begin{eqnarray} 
W_{F,\psi_v}(g) = \int_{Z_{e_1,\a}(\Q_v) \bs SO_X(\Q_v)} r_v^2(g,i_{\rho}(b_1,b_2))\vp_v(e_{1},\a) \ol{\b}_{1,\psi_{v}}(b_1)\b_{2,\psi_v}(b_2) db_{1}db_{2}. \label{eqn:W3}
\end{eqnarray}

Next, suppose that $d_X \not\in (\Q^{\t})^2$.
Put $L = \Q(\sqrt{d_X})$ with ${\rm Gal}(L/\Q) = \{1,c \}$.
For a quaternion algebra $B_{/\Q} \ot L$, put
\begin{eqnarray}
B(L)^{\pm} = \{b \in B_{/\Q} \ot L \mid b^{\im c} = \pm b \} \label{eqn:defBLpm}
\end{eqnarray}
where $\im$ denotes the main involution of $B$.
Then, $X(\Q)$ is isometric to $B(L)^{+}$ or $B(L)^-$ for a suitable $B_{/\Q}$. 
Define the right action $\rho'(t,h)x = t^{-1}h^{\im c}xh$ for $x \in X, t \in k^{\t}, h \in B(kL)^{\t}$ where $k$ denotes $L,L_{\A}$ or $L_w$, the completion of $L$ at a place $w$ of $L$.
By $\rho'$, we have the isomorphisms
\begin{eqnarray}
i_{\rho'}&:& \{(t,b) \in k^{\t} \t B(Lk)^{\t}\}/\{(N_{Lk/k}(s),s) \mid s \in Lk^{\t} \} \simeq GSO_X(k) \nonumber \\
& &\{(t,b) \mid t^2 = N_{Lk/k}N_{B(Lk)/L}(b) \}/\{(N_{Lk/k}(s),s) \mid s \in Lk^{\t} \} \simeq SO_X(k). \label{eqn:isom2}
\end{eqnarray}
Let $\s \in \I(PB(L_{\A})^{\t})$ be cuspidal.
By $i_{\rho'}$, we can identify $\s$ with an irreducible cuspidal automorphic representation of $PGSO_X(\A)$.
Take $f \in \s$, and $\vp = \ot_v \vp_v \in \Sc(X(\A))$.
Then from (\ref{eqn:integal}), the automorphic forms of $\Th_2(\s)$ are written as
\begin{eqnarray*}
\th_2(\vp,f)(g) = \int_{SO_X(\Q)\bs SO_X(\A)} \sum_{x \in X(\Q)^2} r^2(g,i_{\rho'}(t, b)) \vp(x)f(b) dt d b.
\end{eqnarray*}
In the case that $B_{/\Q} \simeq M_2(\Q)$, $\Th_2(\s)$ is globally generic.
We are going to give the Whittaker function $W_{F,\psi_v}$ of $F = \th_2(\vp,f)$.
Define an additive character $\psi_L$ on $L\bs L_{\A}$ by
\begin{eqnarray*}
\psi_L(z) = \ot_v \psi_v \Big({\rm Tr}_{L_w/\Q_v} (z)\Big),
\end{eqnarray*}
where $w$ denotes a place of $L$ lying over $v$.
Let $X(\Q) = M_2(L)^-$.
Let $e,\a \in M_2(L)^-$ as above.
The stabilizer subgroup $Z_{e,\a}(\A) \subset SO_X(\A)$ is isomorphic to 
\begin{eqnarray}
\bigg\{\big(1,\left[\begin{array}{cc}
1 & s \\
 & 1
\end{array}
\right]\big) \mid s \in \sqrt{d_X} \A \bigg\} \label{eqn:zead}
\end{eqnarray} 
by $i_{\rho'}$.
Let $\bp = \ot_w \b_w$ be the global Whittaker function of $f$ associated to $\psi_L$.
If $L_v/\Q_v$ does not split, the local Whittaker function of $F$ is given by
\begin{eqnarray}
W_{F,\psi_v}(g) =\int_{Z_{e,\a}(\Q_v) \bs SO_X(\Q_v)} r_v^2(g,i_{\rho'}(t,b))\vp_v(e, \a) \b_w(b) dt db \label{eqn:glexpwh}
\end{eqnarray}
where $w$ is the (unique) place lying over $v$.
Otherwise, noting that $GSO_X(\Q_v) \simeq GL_2(L_{w_1}) \t GL_2(L_{w_2})/\DD \Q_v^{\t}$ where $w_1,w_2$ are the two places lying over $v$, we give  
\begin{eqnarray*}
W_{F,\psi_v}(g) =\int_{Z_{e,\a}(\Q_v) \bs SO_X(\Q_v)} r_v^2(g,i_{\rho}(b_1,b_2))\vp_v(e, \a) \ol{\b}_{w_1}(b_1) \b_{w_2}(b_2) db_1db_2.
\end{eqnarray*}
On the other hand, in the case that $B_{/\Q}$ is a definite quaternion algebra, we say $\Th_2(\s)$ is the Yoshida lift of $\s$, whose archimedean component is holomorphic.
Let $\s \in \I(PB(L_{\A})^{\t})$ be cuspidal for a definite quaternion algebra $B_{/\Q}$.
Then, the archimedean component of $\s^{\rm JL}$ is a holomorphic discrete series representation with lowest weight greater than or equal to $2$.
By Blasius \cite{Blasius}, $\s^{\rm JL}$ is tempered.
Then,  Roberts \cite{Ro} says the following for the Yoshida lift $\Th_2(\s)$.
\begin{thm}[THEOREM 8.5 of Roberts \cite{Ro}]\label{thm:Roberts}
Let $\s$ be as above.
Assume that $\s^{\rm JL}$ is not a base change lift from $GL_2(\Q_{\A})$.
Then, for every irreducible constituent $\Pi= \ot_v \Pi_v$ of $\Th_2(\s^{\rm JL})$, there is an irreducible constituent $\Pi' = \ot_v \Pi_v'$ of the Yoshida lift $\Th_2(\s)$ such that $\Pi_v \simeq \Pi_v'$ at every nonarchimedean place.
\end{thm}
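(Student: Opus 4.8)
The plan is to reduce this global statement to a purely local comparison of theta correspondences and then invoke the explicit local theta correspondence for the dual pair $(GSO_X, GSp_4)$ at each nonarchimedean place. First I would use the factorization of the global lift recorded above, so that both $\Th_2(\s) = \ot_v \th_2(\s_v)$ and $\Th_2(\s^{\rm JL}) = \ot_v \th_2(\s_v^{\rm JL})$ decompose over the local theta lifts. It therefore suffices to show that at every nonarchimedean place $v$ the local lifts $\th_2(\s_v)$ and $\th_2(\s_v^{\rm JL})$ have isomorphic irreducible constituents, matched so that a global constituent $\Pi = \ot_v \Pi_v$ of $\Th_2(\s^{\rm JL})$ determines a global constituent $\Pi' = \ot_v \Pi_v'$ of $\Th_2(\s)$ with $\Pi_v \simeq \Pi_v'$ for all finite $v$.

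The two orthogonal spaces entering the two lifts are $X = B(L)^{\pm}$ for the definite $B$ and its analogue for the split quaternion algebra $M_2(\Q)$; they have the same discriminant $d_X$ and are locally isometric at every place where $B$ splits, differing (locally anisotropic versus split) only at the finitely many nonarchimedean places where $B$ ramifies. At a place $v$ where $B_v$ is split, $\s_v$ and $\s_v^{\rm JL}$ coincide under the Jacquet-Langlands correspondence, which is the identity on split places, the two four-dimensional quadratic spaces are locally isometric, and hence the local theta lifts are literally the same representation of $GSp_4(\Q_v)$. The content of the theorem is thus concentrated at the nonarchimedean places where $B$ ramifies.

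At such a ramified nonarchimedean place I would appeal to the explicit local theta correspondence for similitude groups together with Howe duality, which at nonarchimedean places guarantees that the local theta lift is nonzero and has a unique irreducible quotient. The key point to establish is that the local lift of the ramified-quaternion representation $\s_v$ and the local lift of its split Jacquet-Langlands correspondent $\s_v^{\rm JL}$ produce isomorphic representations of $GSp_4(\Q_v)$; equivalently, that the $(GO_4, GSp_4)$ correspondence depends on $\s_v$ only through its local $L$-parameter, which Jacquet-Langlands preserves. This can be extracted from the explicit description of the $(GO_4,GSp_4)$ correspondence, or deduced from a seesaw/doubling computation of the local $L$- and $\ep$-factors that pins down the lift inside its local $L$-packet. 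The hypothesis that $\s^{\rm JL}$ is not a base change lift from $GL_2(\Q_{\A})$ enters here, ensuring that each $\s_v^{\rm JL}$ lies in the range where the lift is nonzero and the matching of packet members is unambiguous, ruling out the degenerate base-change configurations where the $GSO_X$-packet and its local constituents behave differently.

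The main obstacle is precisely this last local matching at the ramified places: proving that switching from the anisotropic four-dimensional quadratic space to the split one, while simultaneously replacing $\s_v$ by its Jacquet-Langlands transfer, leaves the nonarchimedean local theta lift to $GSp_4(\Q_v)$ unchanged. The archimedean place is deliberately excluded, since there the Yoshida lift $\Th_2(\s)$ is forced to be holomorphic (as $B$ is definite) while the lift of $\s^{\rm JL}$ is generic; these are distinct members of a common archimedean $L$-packet, so no isomorphism can be expected at infinity. Once the finite-place isomorphisms are in hand, assembling them across all $v$ yields the desired constituent $\Pi' = \ot_v \Pi_v'$ of $\Th_2(\s)$ with $\Pi_v \simeq \Pi_v'$ at every nonarchimedean place.
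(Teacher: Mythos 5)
The first thing you should know is that the paper does not prove this statement at all: it imports it verbatim as THEOREM 8.5 of Roberts \cite{Ro}, after using Blasius \cite{Blasius} to verify its temperedness hypothesis. So your attempt can only be measured against Roberts' original argument, and against that standard it has genuine gaps. The most immediate one is your opening step: the factorizations $\Th_2(\s) = \ot_v \th_2(\s_v)$ and $\Th_2(\s^{\rm JL}) = \ot_v \th_2(\s_v^{\rm JL})$ are false in this setting. The paper warns of exactly this right after the theorem: ``in general $\Th_2(\s), \Th_2(\s^{\rm JL})$ are not irreducible, different from $\Th_2((\s_1,\s_2))$ in the case of $d_X \in (\Q^{\t})^2$.'' The global lifts here are spaces of theta integrals with possibly several irreducible constituents --- which is the only reason the theorem is phrased in terms of constituents at all --- and even identifying the local components of a given constituent with the local lifts $\th_2(\s_v)$ is a nontrivial local--global compatibility statement (resting on Howe duality for the similitude pair, which is delicate at the residue characteristic $2$ relevant here), not a definition.

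Second, and more fundamentally, you never prove $\Th_2(\s) \neq 0$, so you have produced no constituent $\Pi'$ to exhibit. Local isomorphisms cannot simply be ``assembled'': an abstract representation $\ot_v \Pi_v'$ occurs in $\Th_2(\s)$ only if the global theta integral is not identically zero, and global theta lifts can vanish even when every local lift is nonzero (Howe duality, which you invoke, gives uniqueness of the local quotient, not nonvanishing --- and nothing global). Establishing this nonvanishing for the lift from the definite form is the hard core of Roberts' theorem; it is precisely where temperedness --- a hypothesis your proposal never uses --- enters, via an inner-product/$L$-value argument, and where the non-base-change hypothesis does its real work: it is a global condition ($\s^{\rm JL}$ not isomorphic to its Galois conjugate), ensuring the lift to the lower step of the theta tower vanishes and hence that the lifts are cuspidal; it is not about disambiguating a local packet matching. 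Finally, your mechanism at the finite ``ramified'' places --- that the theta lift depends on $\s_v$ only through its $L$-parameter, which Jacquet--Langlands preserves --- is false, and it is contradicted by your own (correct) remark about the archimedean place, where changing the form of the orthogonal group changes the lift from generic to holomorphic; theta lifts from inner forms typically give \emph{different} members of a local packet. What actually makes the finite places harmless in the paper's application is the Hasse principle observation that $B \ot L$ splits at every finite place of $L$, so the two quadratic spaces are isometric there and no nontrivial local comparison is needed.
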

Remark that in general $\Th_2(\s), \Th_2(\s^{\rm JL})$ are not irreducible, different from $\Th_2((\s_1,\s_2))$ in the case of $d_X \in (\Q^{\t})^2$.
\subsection{degenerate Whittaker functions}\label{subsec:degW}
For $1 \le r \le 2$, let $P_r=N_rM_r$ be the maximal parabolic subgroup of GSp(4) with 
\begin{eqnarray*}
N_{P_r} &=& \Bigg\{\left[\begin{array}{cccc}
1_r &  & v & {}^tw \\
 & 1_{2-r} & w &  \\
 &  & 1_r &  \\
 &  &  & 1_{2-r}
\end{array}\right]
\left[\begin{array}{cccc}
1_r & u &  &  \\
 & 1_{2-r} &  &  \\
 &  & 1_r &  \\
 &  & -{}^tu  & 1_{2-r}
\end{array}\right] \\
& & \ \ \ \ \ \mid v = {}^tv \in M_r, u, w \in M_{r,2-r}\Bigg\}, \\ 
M_{P_r} &=& 
\Bigg\{\left[\begin{array}{cccc}
z &  &  &  \\
 & a &  & b \\
 &  & \det(g){}^t z^{-1} &  \\
 & c &  & d
\end{array}\right] \mid g = \left[\begin{array}{cc}
a & b \\
c & d
\end{array}\right] \in GSp_{4-2r}, z \in GL_r \Bigg\} \\
&\simeq& GL_r \t GSp_{4-2r},
\end{eqnarray*}
where we understand $GSp_0 = GL_1, GSp_2 = GL_2$.
We write $P_1 = Q$ (resp. $P_2 = P$) and call it Klingen (resp. Siegel) parabolic subgroup.
Then we have two embeddings $e_P,e_Q$ of GL(2) $\t$ GL(1) into $M_{P_r}$ naturally. 
If $E$ is a non-cuspform on $GSp_4(\A)$, then we obtain an automorphic form on $GL_2(\A)$ by the integral:
\begin{eqnarray}
\Phi_{\bullet}(E)(g,z) = {\rm vol}(N_{\bullet}(k) \bs N_{\bullet}(\A))^{-1}\int_{N_{\bullet}(k) \bs N_{\bullet}(\A)}E(n e_{\bullet}(g,z)) d n \label{eqn:Sigelopad1}
\end{eqnarray}
where $\bullet$ indicates $P$ or $Q$.
If a function $W_{\psi}^{\bullet}$ on $GSp_4(\A)$ satisfies 
\begin{eqnarray}
W_{\psi}^{\bullet}(\left[\begin{array}{cccc}
1 &u & & \\
 & 1& & \\
 & & 1& \\
 & & -u&1
\end{array}\right]\left[\begin{array}{cccc}
1 & &x &y \\
 & 1& y& z\\
 & & 1& \\
 & & & 1
\end{array}\right]g) = W_{\psi}^{\bullet}(g) \t \left\{
\begin{array}{ll}
\psi(u) & \mbox{if $\bullet = P$,}\\
\psi(z) & \mbox{if $\bullet = Q$,}
\end{array}\right. \label{eqn:degW}
\end{eqnarray}
we call it a $P$-degenerate (resp. $Q$-degenerate) Whittaker function.
One can derive a $\bullet$-degenerate Whittaker function from an automorphic form $F$ similar to (\ref{eqn:derWhitt}) (but it is vanishing if $F$ is a cuspform).
If $F$ has a $\bullet$-degenerate Whittaker function, then $\Phi_{\bullet}(F)$ is not identically zero.
We define local $\bullet$-degenerate Whittaker function on $GSp_4(\Q_v)$, similarly.
\section{Automorphic forms on $GSp_4(\A)$}
The idea of our proof of Theorem \ref{thm:mthm1} is as follows.
Let $\Pi^{(i)}$ be the automorphic representation of $GSp_4(\A)$ attached to $g_i$. 
From the shape of the conjectured $L$-function, we guess that $\Pi^{(i)}$ is given by a patch of $\th$-lift $\Th = \ot_v \Th_v$.
Here $\Th$ has the conjectured $L$-function, and $\Th_p$ is unramified at $p \neq 2$.
Obviously, $\Th_p$ at $p \neq 2$ has a right $GSp_4(\Z_p)$-invariant vector.
At $p=2$, we really construct a right $\G(2,4,8)_2$-invariant vector of $\Th_2$, where $\G(2,4,8)_2$ is the open closure of $\G(2,4,8)$ in $GSp_4(\Z_2)$.
Hence, we can conclude that $\Pi^{(i)} = \Th$, using (\ref{eqn:decS3G(248)}) and some eigenvalues of $g_i$ calculated by \cite{vGS}.
Hence, the conjecture is proven.
The vector we compute is a local Whittaker function, or local degenerate Whittaker function of $\Th_2$.
These functions are easy to compute and we can construct easily the $\G(2,4,8)_2$-fixed vector.\\
\subsection{Proof for $g_1$, D-critical case}
For an integral ideal $\n$ of an integer ring $\O$, let
\begin{eqnarray*}
\G_0^{(n)}(\n) := \{\g = \left[\begin{array}{cc}
a_{\g} & b_{\g} \\
c_{\g} & d_{\g}
\end{array}\right] \in GSp_{2n}(\O) \mid a_{\g}, b_{\g}, d_{\g} \in M_n(\O), c_{\g} \in M_{n}(\n) \}.
\end{eqnarray*}
First, we would like to show
\begin{prop}\label{prop:defN}
Let $L/\Q$ be a quadratic extension with discriminant $\d_L$. 
Let $\chi_{L}$ be the quadratic character associated to $L/\Q$.
Let $\pi \in \I(PGL_2(L_{\A}))$ be cuspidal with level $\n$.
Let $p$ be a prime which does not split in $L/\Q$ and $\p$ the unique prime ideal of $L$ lying over $p$.
Then, there is $f \in \Th_2(\pi)$ such as
\begin{eqnarray*}
\vr(\g)f = \chi_{L,p}(\det(a_{\g}))f, \ \ \g \in \G_0^{(2)}(p^N\Z_p),
\end{eqnarray*}
where $\vr$ indicates the right translation and 
\begin{eqnarray*}
N = 
\left\{
\begin{array}{ll}
{\ro_{\p}(\d_L)+2^{-1}\ro_{\p}(\n)} & \mbox{if $p$ is ramified and $\ro_{\p}(\n)$ is even,}\\
{\ro_{\p}(\d_L)+2^{-1}(\ro_{\p}(\n)+1)} & \mbox{if $p$ is ramified and $\ro_{\p}(\n)$ is odd,}\\
\ro_{\p}(\n) & \mbox{if $p$ is inert.}
\end{array}\right.
\end{eqnarray*}
\end{prop}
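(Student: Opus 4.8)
The plan is to reduce the assertion to a purely local construction at $p$ and to read the required eigenvector off the explicit local Whittaker function. Since $\Th_2(\pi)$ is a restricted tensor product and the transformation law constrains only the action of $\G_0^{(2)}(p^N\Z_p) \subset GSp_4(\Q_p)$, it suffices to exhibit one nonzero $F = \th_2(\vp,f_0) \in \Th_2(\pi)$, with $f_0$ in the space of $\pi$, whose local Whittaker function at $p$ is a $\G_0^{(2)}(p^N\Z_p)$-eigenvector with eigencharacter $\g \mapsto \chi_{L,p}(\det a_\g)$; at the remaining places one takes data for which the local integrals are nonzero. Because $p$ does not split, $L_w/\Q_p$ is a field, $GSO_X(\Q_p)$ is described by (\ref{eqn:isom2}), and by (\ref{eqn:glexpwh}) the local Whittaker function is the single integral pairing a Schwartz function $\vp_p \in \Sc(X(\Q_p)^2)$ against the local new vector Whittaker function $\b_w$ of $\pi_w$.

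First I would take $\vp_p$ to be a scalar multiple of the characteristic function of a lattice $\Lambda \subset X(\Q_p)^2$ and test the eigenvector condition directly through the Weil representation. Decomposing $\g \in \G_0^{(2)}(p^N\Z_p)$ within the Siegel parabolic into its upper-unipotent part, its Levi part with $a_\g \in GL_2(\Z_p)$, and its lower-unipotent part with entries in $p^N M_2(\Z_p)$ (its similitude lies in $\Z_p^\t$ and acts harmlessly through the extended Weil representation), I would compute $r_p^2(\g,1)\vp_p$. By (\ref{eqn:weilprop3}) the upper-unipotent part acts trivially once $\Lambda$ is chosen so that $\psi_p({\rm tr}(b(x,x))/2)=1$ on its support; by (\ref{eqn:weilprop2}), since $|\det a_\g|_p = 1$, the Levi part acts by the scalar $\chi_X(\det a_\g) = \chi_{L,p}(\det a_\g)$ and forces $\Lambda$ to be stable under $x \mapsto x a_\g$. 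This is precisely how the eigencharacter in the statement is produced.

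The heart of the matter is the lower-unipotent block. Rewriting it via the Weyl element (\ref{eqn:weilprop4}) as a conjugate of an upper-unipotent element, its action on $\vp_p$ becomes a Fourier transform followed by multiplication by an additive character; invariance of $\vp_p$, beyond the scalar already recorded, then amounts to a self-duality condition relating $\Lambda$ and its dual lattice $\Lambda^*$ with respect to the form, at level $p^N$. The minimal $N$ admitting a nonzero such $\Lambda$ is governed by $\Lambda^*$ and the conductor of $\psi_p$, and is pinned down by demanding that, through $i_{\rho'}$, the lattice be compatible with the support of the new vector $\b_w$, whose $\p$-adic conductor is $\ro_\p(\n)$. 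Nonvanishing of $F$ then follows by verifying that the pairing of $\vp_p$ against $\b_w$ in (\ref{eqn:glexpwh}) is nonzero, via local new vector theory for $\pi_w$.

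The main obstacle is the bookkeeping that converts the $\p$-adic conductor $\ro_\p(\n)$ of the $GL_2(L_w)$ new vector into the $p$-adic level $N$, and in particular the appearance of the three regimes. In the inert case $\ro_\p$ restricts to $\ro_p$ on $\Q_p^\t$, the conductor transfers verbatim, and $N = \ro_\p(\n)$. In the ramified case $\ro_\p(x) = 2\,\ro_p(x)$ for $x \in \Q_p^\t$, so the $\p$-adic conductor contributes only $\ro_\p(\n)/2$ to the $p$-adic level, with the ceiling correction $2^{-1}(\ro_\p(\n)+1)$ when $\ro_\p(\n)$ is odd; the extra term $\ro_\p(\d_L)$ records the different of $L_w/\Q_p$, which enters through the conductor of the character $\psi_L = \psi \circ {\rm Tr}_{L_w/\Q_p}$ defining $\b_w$, equivalently through the discriminant factor in (\ref{eqn:weilprop2}). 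Carrying out this valuation conversion in the ramified case, and confirming that the constructed vector is genuinely nonzero, is the delicate step.
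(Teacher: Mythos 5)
Your reduction to a single local computation at $p$ and your construction of the eigen--Schwartz function agree in substance with the paper's proof: there, $\vp^{(1)}(x_1,x_2)=\Ch(x_1;R_0(\n))\Ch(x_2;R_0(\n))$ is exactly the characteristic function of a lattice (a product of Eichler orders), engineered so that applying (\ref{eqn:weilprop2})--(\ref{eqn:weilprop4}) to the Iwahori-type factorization of $\g\in\G_0^{(2)}(p^N\Z_p)$ produces the character $\chi_{L,p}(\det a_{\g})$; that is precisely your Siegel-parabolic decomposition argument, and this part of your proposal is sound. The genuine gap is the final step, where you assert that nonvanishing ``follows by verifying that the pairing of $\vp_p$ against $\b_w$ in (\ref{eqn:glexpwh}) is nonzero, via local new vector theory for $\pi_w$.'' That verification is not a routine check --- it is the entire content of the paper's proof, and it behaves in a way your outline does not anticipate.

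Concretely, the paper evaluates (\ref{eqn:expwh}) by unfolding over an explicit set of representatives of $Z_{e,\a}(\Q_2)\bs SO_X(\Q_2)/\U_2$ coming from the Iwasawa decomposition of $GL_2(L_{\p})$, split into type i) cells (upper-triangular times lower-unipotent) and type ii) cells containing the Atkin--Lehner element $\bigl[\begin{smallmatrix}&-1\\2^N&\end{smallmatrix}\bigr]$. The type i) contributions, which are the ones a naive appeal to newform theory would try to use, cancel \emph{identically}: cells with $m<0$ cancel in pairs because translating by $\bigl[\begin{smallmatrix}1&1/4\\&1\end{smallmatrix}\bigr]$ preserves the support of $\vp^{(1)}$ while flipping the sign of $\b_2$, and the remaining $m=0$ cells cancel because the newvector satisfies $\vr\bigl(\bigl[\begin{smallmatrix}1&\\c&1\end{smallmatrix}\bigr]\bigr)\b_2=-\b_2$ for $c\in\sqrt{2^{-1}}\d_L\n\setminus\d_L\n$. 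The nonzero contribution comes exclusively from the type ii) cells, and detecting it requires both the support property (\ref{eqn:propb2}) and the Fricke eigenvector property (\ref{eqn:propb2-1}) of $\b_2$ --- neither of which appears in your proposal. In particular, a lattice $\Lambda$ satisfying your self-duality and stability constraints can perfectly well pair to zero against $\b_w$, so your argument does not establish the existence claim $W_{f,\psi_2}(1)\neq 0$, hence $f\neq 0$. (By contrast, the conductor bookkeeping you single out as the delicate point is handled in the paper simply by carrying out the case $L=\Q(\sqrt{2})$, $p=2$ explicitly and remarking that the inert and remaining ramified cases are similar or simpler.)
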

We will prove for the first case with $L=\Q(\sqrt{2})$ and $p=2$.
In other cases, the proof is similar or simpler, and omitted.
For an integral ideal $\n$ of $\o_{\p}$, let  
\begin{eqnarray*}
\G_0'(\n) = \left[\begin{array}{cc}
\o_{\p} & \d_L^{-1} \\
\n & \o_{\p}
\end{array}\right] \cap GL_2(L_{\p}).
\end{eqnarray*}
In the case $\ro_{\p}(\n) = 0$, the proof is easier than the below, so we omit.
Suppose that $\ro_{\p}(\n)$ is a positive (even) integer.
Then, $\pi_{\p}$ is a ramified principal series representation or a supercuspidal representation.
Noting that the level of $\pi$ is $\n$ and the conductor of $\psi$ is $(2\sqrt{2})^{-1}$, we see by the local newform theory for GL(2) that $\pi_{\p}$ has a right $\G_0(\d_L\n)'$-invariant local Whittaker function $\b_2$ associated to $\psi_{L_{\p}}$ such that
\begin{eqnarray}
\b_2(\left[\begin{array}{cc}
1 & z\\
 & 1
\end{array}\right]\left[\begin{array}{cc}
t & \\
 & 1
\end{array}\right]) &=& \left\{
\begin{array}{ll}
\psi_{L_{\p}}(z) & \mbox{if $t \in \o_{\p}^{\t}$,}\\
0 & \mbox{otherwise.}
\end{array}\right. \label{eqn:propb2}\\
\vr(\left[\begin{array}{cc}
 & -1\\
2^{N} &
\end{array}\right])\b_2 &=& \pm \b_2 \label{eqn:propb2-1}
\end{eqnarray}
for $N$ given in Proposition \ref{prop:defN}.
For an integral ideal $\m$ of an integer ring $\O$, let $R_0(\m)$ be the Eichler order of level $\m$, i.e.,
\begin{eqnarray}
R_0(\m)=\{\left[\begin{array}{cc}
a & b\\
c & d
\end{array}\right] \in M_2(\O) \mid c \in \m \}. \label{eqn:Eichlerord}
\end{eqnarray} 
We define the Schwartz-Bruhat function of $M_2(L_{\p})^2$ by 
\begin{eqnarray*}
\vp^{(1)}(x_1,x_2) = \Ch(x_1;R_0(\n))\Ch(x_2;R_0(\n))
\end{eqnarray*}
where $\Ch$ indicates the characteristic function.
We have defined $\vp^{(1)}$ so that 
\begin{eqnarray*}
r_2^2(g,i_{\rho'}(t,h))\vp^{(1)} = r_2^2(g,1)\vp^{(1)}
\end{eqnarray*}
if $(t,h) \in i_{\rho'}^{-1}(SO_X(\Q_2))$ with $h \in \G_0(2^N)$ (see (\ref{eqn:isom2}) for the definition of $i_{\rho'}$), and so that 
\begin{eqnarray}
r_2^2(\g,h)\vp^{(1)} = \chi_{L,2}(\det a_{\g}) r_2^2(1,h)\vp^{(1)} \label{eqn:chrg1}
\end{eqnarray}
if $\g \in \G_0(2^N)$.
Then, we are going to check that the local Whittaker function (\ref{eqn:glexpwh}), constructed from $\b_2$ and $\vp^{(1)}$, is not zero at $g=1$.
Put $\U_2 = i_{\rho'}(\Q_2^{\t} \t \G_0(2^N)) \cap SO_X(\Q_2)$.
The local Whittaker function at $g=1$ is written as
\begin{eqnarray}
\vol(\U_2) \int_{Z_{e,\a}(\Q_2) \bs SO_X(\Q_2) /\U_2} r_v(1, (\ol{t},\ol{h}))\vp_v(e, \a) \b_v(\ol{h}) d (\ol{t},\ol{h}), \label{eqn:expwh}
\end{eqnarray}
where $(\ol{t},\ol{h})$ indicates the projection of $(t,h) \in SO_X(\Q_2)$.
By the Iwasawa decomposition of $GL_2(L_{\p})$, we take the following complete system of representatives for $Z_{e,\a}(\Q_2) \bs SO_X(\Q_2) / \U_2$ (see (\ref{eqn:zead}) for $Z_{e,\a}$):
\begin{enumerate}[i)-type]
 \item $(2^m,\left[\begin{array}{cc}
1 & s\\
 & 1
\end{array}\right] \left[\begin{array}{cc}
2^m & \\
 & 1
\end{array}\right]\left[\begin{array}{cc}
1 & \\
l & 1
\end{array}\right])$ with $s \in \Q_2, m \in \Z$ and $l \in \o_{\p}$ modulo $2^N$,
 \item $(2^{m+\frac{N}{2}}, \left[\begin{array}{cc}
1 & s\\
 & 1
\end{array}\right]\left[\begin{array}{cc}
2^m & \\
 & 1
\end{array}\right]\left[\begin{array}{cc}
 & -1\\
2^N&
\end{array}\right])$.
\end{enumerate}
We observe the contribution of the above types to the integral (\ref{eqn:expwh}).
We will denote
$\rho'(t,h)(e,\a)=(\left[\begin{array}{cc}
a_1 & b_1\\
c_1 & d_1
\end{array}\right],\left[\begin{array}{cc}
a_2 & b_2\\
c_2 & d_2
\end{array}\right])$.
For the i)-type, we calculate    
\begin{eqnarray*}
\rho'(t,h)(e,\a)=\Big(\left[\begin{array}{cc}
2^{-m} l & 2^{-m} \\
-2^{-m} ll^{c} & -2^{-m}l^{c}
\end{array}\right],
\left[\begin{array}{cc}
1+ 2^{-m+1}ls & 2^{-m+1}s \\
-(l+l^c) - 2^{-m+1}ll^{c}s & -1- 2^{-m+1} l^{c}s
\end{array}\right]\Big).
\end{eqnarray*}
Suppose $\rho'(t,h)(e,\a) \in {\rm supp}(\vp^{(1)})$.
Then, we find $m \le 0$ by observing $b_1$.
If $m <0$, then 
\[
\rho'(t,\left[\begin{array}{cc}
1 & 1/4\\
 & 1
\end{array}\right]h)(e,\a)
\]
is also in ${\rm supp}(\vp^{(1)})$.
However, noting 
\begin{eqnarray*}
\b_2(\left[\begin{array}{cc}
1 &1/4 \\
 & 1
\end{array}\right]h) = -\b_2(h),
\end{eqnarray*} 
we conclude the contribution is canceled. 
Hence we can assume $m=0$.
Then, observing $c_1$, we find $l \in \p^N$. 
Observing $b_2$, $s \in 2^{-1}\o_{\p}$.
Conversely, if $m = 0, l \in \p^N$ and $s \in 2^{-1}\o_{\p}$, then $\rho'(t,h)(e,\a) \in {\rm supp}(\vp^{(1)})$.
However, since $\b_2$ is a local newvector which is right $\G_0(\d_L\n)'$-invariant, 
\begin{eqnarray*}
\vr(\left[\begin{array}{cc}
1 & \\
c & 1
\end{array}\right])\b_2 = -\b_2
\end{eqnarray*}
if $c \in \sqrt{2^{-1}}\d_L\n \setminus \d_L\n$.
Using this, one can check easily that the total contribution of i)-type is canceled by this property.
On the other hand, for the ii)-type, we calculate  
\begin{eqnarray*}
\rho'(t,h)(e,\a) = 
\Big(\left[\begin{array}{cc}
0 & 0\\
2^{N-m} & 0 
\end{array}\right],\left[\begin{array}{cc}
-1 & 0\\
-2^{N+1-m}s & 1 
\end{array}\right]\Big).
\end{eqnarray*}
Suppose $\rho'(t,h)(e,\a) \in {\rm supp}(\vp^{(1)})$.
For this, observing $c_1$, we find $m \le 0$ is needed.
By (\ref{eqn:propb2}), we can assume $m=0$.
In this case, observing $c_2$ we have $s \in 2^{-1} \o_{\p}$.
Then, using (\ref{eqn:propb2}), one check easily that the total contribution of this type is not zero.
Thus there is an automorphic form $f \in \Th_2(\pi(\l))$ which has a right $\G_0(2^N)$-{\it semi} invariant local Whittaker function $W_{f,\psi_2}$ such as $W_{f,\psi_2}(1) \neq 0$.
Clearly, $f$ is also right $\G_0(2^N)$-{\it semi} invariant in the sense of Proposition \ref{prop:defN}.
This completes the proof of the Proposition.

Now, we are going to prove the conjecture for $g_1$.
Let $\zeta_8=\frac{\sqrt{2} +\sqrt{-2}}{2}$.
Let $L = \Q(\sqrt{2})$ (resp. $\ K = \Q(\zeta_8)$ with the ring of integers $\o$ (resp. $\OO$).
Let $\p$ (resp. $\P$) be the unique (ramified) prime ideal of $\o$ (resp. $\OO$) lying over the prime ideal $2$ of $\Q$.
Recall the definition of the quasi-character $\l$ of $K_{\A}^{\t}$ in p.870 of \cite{vGS}.
The conductor of $\l$ is $(2) = \P^4$, and 
\begin{eqnarray*}
(\OO/\P^4)^{\t}  = \la \zeta_8 (\bmod{2}) \ra \op \la 1+\sqrt{2} (\bmod{2}) \ra \simeq \Z/4\Z \op \Z/2\Z.
\end{eqnarray*}
The $\P$-component $\l_{\P}$ is determined by 
\begin{eqnarray*}
\l_{\P}(\zeta_8 (\bmod{2})) = 1,\ \ \ \l_{\P}(1 + \sqrt{2} (\bmod{2})) = -1.
\end{eqnarray*}
We define the quasi-character $\nu$ on $L_{\p}^{\t}$ with conductor $\p^3$ by $\nu_{\p}(1 +\sqrt{2} (\bmod{\p^3})) = \sqrt{-1}$, where $(\o/\p^3)^{\t} = \la 1+ \sqrt{2} (\bmod{\p^3}) \ra \simeq \Z/4\Z$.
Then, it holds $\l_{\P} = \nu_{\p} \circ N_{K/L}$.
Let $\pi(\l) = \ot_v \pi(\l)_v$ be the unitary irreducible cuspidal automorphic representation of $GL_2(L_{\A})$ obtained from $\l$.
By Theorem 4.6. (iii) of \cite{JL}, $\pi(\l)_{\p}$ is a principal series representation
\begin{eqnarray}
\pi_{\p} = \pi(\nu_{\p},\nu_{\p}\chi_{K/L,\p}) = \pi(\nu_{\p},\ol{\nu}_{\p}), \label{eqn:pi(l)}
\end{eqnarray}
where $\chi_{K/L,\p}$ is the quadratic character of $L_{\p}$ associated to $K_{\P}/L_{\p}$.

Let $g_0 = {\rm diag}(32,8,4^{-1},1) \in GSp_4(\Q_2)$ and 
\begin{eqnarray*}
\K^{(1)} := g_0^{-1} \G_0^{(2)}(64\Z_2) g_0 = \left[\begin{array}{cccc}
\Z_2 & 4\Z_2 & 64 \Z_2 & 32\Z_2\\
4^{-1}\Z_2 & \Z_2 & 32\Z_2 & 8\Z_2\\
 \Z_2 & 2\Z_2 & \Z_2 & 4^{-1}\Z_2\\
2 \Z_2 & 8\Z_2 & 4\Z_2 & \Z_2
\end{array}\right] \cap Sp_4(\Q_2).
\end{eqnarray*}
By the above discussion, we can take a right $\K^{(1)}$-semi invariant automorphic form $f \in \Th_2(\pi(\l))$ so that $W_{f,\psi_2}(1) \neq 0$.   
By the property of Whittaker function, 
\begin{eqnarray*}
\vr(\left[\begin{array}{cccc}
1 & &s_1 & s_2\\
 & 1&s_2 & \\
 & & 1& \\
 & & &1
\end{array}\right])W_{f,\psi_2}(1) = W_{f,\psi_2}(1) \neq 0
\end{eqnarray*}
for $s_1,s_2 \in \Q_2$.
Hence
\begin{eqnarray}
\int_{\G(2,4,8)_2} \vr(u)W_{f,\psi_2}(1) du \neq 0 \label{eqn:avecons}
\end{eqnarray}
where $\G(2,4,8)_2$ indicates the open closure in $Sp_4(\Z_2)$.
Hence $\Th_2(\pi(\l))$ has a right $\G(2,4,8)_2$-invariant vector. 
Let $B_{/\Q}$ be a definite quaternion algebra which splits outside of $\{\i,2\}$.
Then, $B_{/\Q} \ot L$ splits at every nonarchimedean place of $L$ by the Hasse principle.
The multiple weight of the Hilbert modular forms $\pi(\l)$ over $\Q(\sqrt{2})$ is $(4,2)$.
Hence, there is a (unique) cuspidal $\pi(\l)' \in \I(PB(L_{\A})^{\t})$ corresponding to $\pi(\l)$, i.e., $(\pi(\l)')^{{\rm JL}} = \pi(\l)$.
\begin{thm}\label{thm:g1orb}
The Yoshida lift $\Th_2(\pi(\l)')$ has a right $\G(2,4,8)_2 \t \prod_{p \neq 2} GSp_4(\Z_p)$-invariant vector, as well as the globally generic $\Th_2(\pi(\l))$ of (cohomological) weight $(3,-1)$.
\end{thm}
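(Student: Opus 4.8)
The plan is to reduce the holomorphic (Yoshida) case to the generic case just treated, via Roberts' matching theorem (Theorem~\ref{thm:Roberts}). Write $\s = \pi(\l)'$, so that $\s \in \I(PB(L_\A)^\t)$ for the definite quaternion algebra $B_{/\Q}$ and $\s^{\rm JL} = \pi(\l)$. With this notation the globally generic lift $\Th_2(\pi(\l))$ is $\Th_2(\s^{\rm JL})$, built from the split model $X(\Q) = M_2(L)^-$, while $\Th_2(\pi(\l)')$ is the Yoshida lift $\Th_2(\s)$, built from the definite model; both theta lifts are attached to the same quadratic space $X$ whose discriminant generates $L = \Q(\sqrt2)$.

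First I would record what the preceding computation gives. The proof of Proposition~\ref{prop:defN} together with the averaging (\ref{eqn:avecons}) shows that $\Th_2(\pi(\l))$ contains a nonzero vector fixed by $\G(2,4,8)_2 \t \prod_{p\neq2}GSp_4(\Z_p)$: it is $\G(2,4,8)_2$-invariant at $2$ and spherical at every $p \neq 2$. Let $\Pi = \ot_v \Pi_v$ be the irreducible constituent of $\Th_2(\pi(\l))$ carrying this vector. Since the vector arises from a nonvanishing global Whittaker function, $\Pi$ is the generic constituent, and its finite part $\ot_{v<\i}\Pi_v$ already contains the fixed vector.

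Next I would verify the hypotheses of Theorem~\ref{thm:Roberts}. The temperedness and lowest-weight conditions hold because $\pi(\l)$ is the Hilbert modular form over $\Q(\sqrt2)$ of multiweight $(4,2)$, as used in the discussion preceding that theorem. The one point needing genuine checking is that $\s^{\rm JL} = \pi(\l)$ is \emph{not} a base change lift from $GL_2(\Q_\A)$. As $\pi(\l) = {\rm AI}_{K/L}(\l)$ is dihedral with respect to $K/L$, this is equivalent to the assertion that $\l$ does not also descend through either of the other two quadratic subfields $\Q(\sqrt{-1}), \Q(\sqrt{-2})$ of $K = \Q(\zeta_8)$, i.e.\ that $\l^{\tilde\tau} \notin \{\l, \l^{c}\}$ for a lift $\tilde\tau \in {\rm Gal}(K/\Q)$ of the nontrivial element of ${\rm Gal}(L/\Q)$; this is read off from the explicit description of $\l$ recalled above, and is exactly the statement that $g_1$ falls in the D-critical case rather than being a weak endoscopic lift defined over $\Q$.

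Granting this, Theorem~\ref{thm:Roberts} yields an irreducible constituent $\Pi' = \ot_v \Pi'_v$ of the Yoshida lift $\Th_2(\pi(\l)')$ with $\Pi'_v \simeq \Pi_v$ at every nonarchimedean place, while $\Pi'_\i$ is automatically the holomorphic archimedean component of the Yoshida lift. Since $\G(2,4,8)_2 \t \prod_{p\neq2}GSp_4(\Z_p)$ is an open compact subgroup of $GSp_4(\A_f)$, supported entirely at finite places, its space of fixed vectors in $\ot_{v<\i}\Pi'_v$ is canonically identified with that in $\ot_{v<\i}\Pi_v$, which is nonzero; hence $\Pi' \subset \Th_2(\pi(\l)')$ carries the desired invariant vector. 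The main obstacle is the hypothesis check of the previous paragraph --- confirming that $\pi(\l)$ is a genuine non-base-change CM form so that Roberts' matching is available; once that holds, the transfer of the fixed vector is formal, because the fixing group involves only finite places, where $\Pi$ and $\Pi'$ coincide.
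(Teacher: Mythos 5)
Your overall route coincides with the paper's: first record that the globally generic lift $\Th_2(\pi(\l))$ carries a $\G(2,4,8)_2 \t \prod_{p \neq 2} GSp_4(\Z_p)$-fixed vector (unramifiedness of $\pi(\l)$ and of $L/\Q$ away from $2$, plus the Whittaker-function construction at $2$), then invoke Theorem~\ref{thm:Roberts}, then transfer the fixed vector through the nonarchimedean isomorphisms $\Pi_v' \simeq \Pi_v$, noting that the fixing subgroup lives entirely at finite places. Those two steps are correct and are exactly what the paper does.

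The problem is the step you yourself call the main obstacle: verifying that $\pi(\l)$ is \emph{not} a base change lift from $GL_2(\Q_{\A})$. Your reformulation via the dihedral structure --- that $\l^{\tilde\tau} \notin \{\l, \l^{c}\}$ for a lift $\tilde\tau$ of the nontrivial element of ${\rm Gal}(L/\Q)$ --- is a correct equivalence, but you never carry out the check, and the data you propose to read it off from (``the explicit description of $\l$ recalled above'') does not suffice: the paper recalls only the restriction of $\l_{\P}$ to $(\OO/\P^4)^{\t}$, and on those generators $\l_{\P}$ is in fact $\tilde\tau$-invariant. Indeed, taking $\tilde\tau(\zeta_8) = \zeta_8^3$ one gets $\l_{\P}(\zeta_8^3) = \l_{\P}(\zeta_8)^3 = 1$, and $\tilde\tau(1+\sqrt{2}) = 1-\sqrt{2} = -(1+\sqrt{2})^{-1}$ gives $\l_{\P}(1-\sqrt{2}) = \l_{\P}(-1)\,\l_{\P}(1+\sqrt{2})^{-1} = -1$; so the finite data mod $2$ cannot distinguish $\l^{\tilde\tau}$ from $\l$. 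What does distinguish them is the archimedean infinity type, i.e.\ precisely the non-parallel multiweight $(4,2)$ --- and that is the paper's one-line argument: every base change lift from $\Q$ has pure (parallel) multiple weight at the archimedean places of $L$, whereas $\pi(\l)$ has multiple weight $(4,2)$, hence is not a base change. You should replace your unexecuted Galois-descent check by this weight argument (or, equivalently, carry the descent check out using the infinity types, which amounts to the same observation); as written, your proof of the one nontrivial hypothesis is circular in the sense that completing it forces you back to the archimedean input you only used for temperedness.
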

\begin{proof}
Since $\pi(\l)$ is unramified outside of $\{\p\}$, the place lying over $2$, $\Th_2(\pi(\l))$ is also unramified outside of $\{2\}$.
Here notice the extension $L/\Q$ is also unramified outside of $\{2\}$.
At $\p$, by the above discussion, we find that $\Th_2(\pi(\l))$ has a right $\G(2,4,8)_2$-invariant vector.
Hence $\Th_2(\pi(\l))$ has a right $\G(2,4,8)_2 \t \prod_{p \neq 2} GSp_4(\Z_p)$-invariant vector.
Every base change lift has a pure multiple weight at archimedean places, but $\pi(\l)$'s multiple weight is $(4,2)$.
Hence $\pi(\l)$ is not a base change lift from $GL_2(\Q_{\A})$.
Hence, by Theorem \ref{thm:Roberts}, $\Th_2(\pi(\l)')$ also has a right $\G(2,4,8)_2 \t \prod_{p \neq 2} GSp_4(\Z_p)$-invariant vector.
\hspace*{\fill}$\square$
\end{proof}
Finally, we prove the conjecture on Andrianov-Evdokimov's $L$-function of $g_1$.
Evdokimov in \cite{Ev} defined a $L$-function for a classical Siegel modular form $F$ of degree $2$.
We denote it by $L(s,F;{\rm AE})$ and call Andrianov-Evdokimov $L$-function of $F$.
On the other hand, the spinor $L$-function is defined for generalized Whittaker models of an automorphic form on $GSp_4(\A)$.
Let $S_{F}$ be the finite set of places where $F$ is not $GSp_4(\Z_p)$-invariant.
Suppose $F$ is an eigenform with respect to Evdokimov's Hecke operators outside of $S_F$.
Extend $F$ to an automorphic form $f$ on $GSp_4(\A)$ as in the way p. 121 of \cite{O2}.
Take an irreducible automorphic representation $\pi$ so that $f$ is not orthogonal to $\pi$.
Then it holds 
\[
L_{S_F}(s,\pi;{\rm spin}) = L_{S_F}(s,F;{\rm AE}).
\]
Now, recall that van Geemen and van Straten \cite{vGS} decomposed the space $S_3(\G(2,4,8))$ to eleven irreducible $Sp_4(\Z)$-orbits, where $Sp_4(\Z)$ acts on $S_3(\G(2,4,8))$ in the classical way.
On the other hand, as seen above, the Yoshida lift $\Th_2(\pi(\l)')$ has a right $\G(2,4,8)_2 \t \prod_{p \neq 2} GSp_4(\Z_p)$-invariant vector, hence $\Th_2(\pi(\l)')$ contains one orbit, at least.
Among the eleven $Sp_4(\Z)$ orbits of $F_i,g_j$, we find only one of $g_1$ belongs $\Th_2(\pi(\l)')$, comparing Hecke eigenvalues at $p = 3,5,7$ in the table of section 8 of \cite{vGS}.
Hence we conclude:
\begin{Cor}
The Yoshida lift $\Th_2(\pi(\l)')$ contains the Siegel modular form $g_1$.
The Andrianov-Evdokimov $L$-function of $g_1$ is $L(s,\l)$, outside of $\{2\}$.
The conjecture is true.
\end{Cor}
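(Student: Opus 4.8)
The plan is to convert the representation-theoretic content of Theorem \ref{thm:g1orb} into a statement about classical Siegel cusp forms, and then to pin down the resulting form inside the eleven-orbit decomposition (\ref{eqn:decS3G(248)}) by a finite Hecke-eigenvalue comparison. First I would apply Theorem \ref{thm:g1orb}: the Yoshida lift $\Th_2(\pi(\l)')$ has holomorphic archimedean component (of cohomological weight $(3,-1)$, i.e.\ scalar weight $3$) and carries a vector fixed under $\G(2,4,8)_2 \t \prod_{p \neq 2} GSp_4(\Z_p)$. Such a vector is classical, so it defines a nonzero element $F \in S_3(\G(2,4,8))$. Consequently the $Sp_4(\Z)$-orbit of $F$ is one of the eleven orbits spanning $S_3(\G(2,4,8))$, and $\Th_2(\pi(\l)')$ contains at least one of them.

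Next I would compute the spin $L$-function and the unramified Hecke data of the lift. Since $K = \Q(\zeta_8) = \Q(\sqrt{-1},\sqrt 2)$ and $\pi(\l)$ is the automorphic induction of $\l$ from $K$ to $L = \Q(\sqrt 2)$, the degree-four spin $L$-function of $\Th_2(\pi(\l))$ equals $L(s,\pi(\l)) = L(s,\l)$ away from $\{2\}$; in particular $\pi(\l)$ is unramified at every $p \neq 2$, so $\Th_2(\pi(\l))_p$ has explicit spin Satake parameters read off from $\l_p$. By Theorem \ref{thm:Roberts} the holomorphic member $\Th_2(\pi(\l)')$ shares these local components at every finite place, so its Andrianov--Evdokimov eigenvalues at $p = 3,5,7$ are determined by $\l$. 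Matching these against the table in section 8 of \cite{vGS} should single out the orbit: I expect the computed values to agree with those of $g_1$ and to differ from each of the other ten orbits at one of $p = 3,5,7$. This identifies $F$ with $g_1$ and yields $L_{S_F}(s,g_1;\mathrm{AE}) = L_{S_F}(s,\Th_2(\pi(\l));\mathrm{spin}) = L(s,\l)$ outside $\{2\}$, via the equality of the Andrianov--Evdokimov and spin $L$-functions recalled from \cite{O2}. Since this is exactly the conjectured shape, the conjecture for $g_1$ follows.

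The main obstacle will be the decisiveness of the identification step rather than any single computation. Because the Yoshida lift need not be irreducible (cf.\ the Remark after Theorem \ref{thm:Roberts}) and because $S_3(\G(2,4,8))$ breaks into eleven orbits, I must rule out the possibility that $\Th_2(\pi(\l)')$ meets several orbits or an orbit distinct from that of $g_1$; this rests entirely on the eigenvalue comparison at $3,5,7$ being separating, for which the explicit Satake parameters coming from $\l$ are essential. A secondary point to confirm is nonvanishing and cuspidality of the produced form, but this is already secured by the explicit nonzero local Whittaker computation underlying Proposition \ref{prop:defN} for the generic member, transferred to the holomorphic member at finite places through Theorem \ref{thm:Roberts}.
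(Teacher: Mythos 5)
Your proposal is correct and follows essentially the same route as the paper: invoke Theorem \ref{thm:g1orb} to produce a classical form in $S_3(\G(2,4,8))$ inside the Yoshida lift, locate it among the eleven $Sp_4(\Z)$-orbits by comparing Hecke eigenvalues at $p=3,5,7$ with the table in \cite{vGS}, and then transfer the spin $L$-function $L(s,\l)$ to the Andrianov--Evdokimov $L$-function via the extension procedure of \cite{O2}. Your added remarks on the lift possibly meeting several orbits and on nonvanishing via the Whittaker computation only make explicit what the paper's ``contains one orbit, at least'' argument already handles.
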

\begin{Rem}
Noting that $\nu|_{\Q_2^{\t}}$ is ramified, one can see $\th_2(\pi(\l)_{\p})$ is not a generically distinguished representation, in the sense of \cite{Ro2}
From this, one can show $\th_2(\pi(\l)_{\p})$ is not a Klingen parabolically representation.
Further, consulting the Langlands parameters attached to parabolically induced representations due to Roberts and Schmidt \cite{Ro-Sch}, we find that $\th_2(\pi(\l)_{\p})$ is supercuspidal.
\end{Rem}
\subsection{Proof for $g_4$, a Saito-Kurokawa lift}
We extend the Siegel modular form $g_4$ to an automorphic form on $PGSp_4(\A)$ as in the way p. 121 of \cite{O2}, and denote it also by $g_4$. 
We define the automorphic form on $PGSp_4(\A)$ by
\begin{eqnarray*}
g_4^{(-2)}(g) = \chi_{-2}(pf(g))g_4(g)
\end{eqnarray*}
where $\chi_{-2}(*) = (\frac{-2}{*})$, and $pf(g)$ is the similitude norm of $g \in GSp_4(\A)$.
Recall the fact that $g_4$ is a Hecke eigenform, due to \cite{vGS}.
Let $\Pi^{(4)}, \chi_{-2} \Pi^{(4)}$ be the unitary irreducible cuspidal automorphic representations attached to $g_4, g_4^{(-2)}$. 
Observing eigenvalues of $g_4$ in the table of section 8 of \cite{vGS}, we find that $g_4$ does not satisfy the generalized Ramanujan conjecture. 
Indeed
\begin{eqnarray}
|\a_{p1}| = |\a_{p2}| = p^{\frac{3}{2}}, \ \ |\a_{p3}| =p, \ \ |\a_{p4}| = p^2 \label{eqn:noRam}
\end{eqnarray}
for $p = 3,5,7,11,13,17,19$, if we write $\prod_{i=1}^4 (X-\a_{pi})$ the Hecke polynomial of $\Pi_p^{(4)}$.
Then, by a general theory, we can say the following.
\begin{prop}
The automorphic representation $\chi_{-2}\Pi^{(4)}$ is a Saito-Kurokawa lift, i.e., 
\[
L_S(s,\chi_{-2}\Pi^{(4)};{\rm spin}) = \zeta_S(s-\frac{1}{2})\zeta_S(s+\frac{1}{2})L_S(s,\s)
\]
for some cuspidal $\s \in \I(PGL_2(\A))$, and a finite set $S$ of places.
Clearly, $\Pi^{(4)}$ is the $\chi_{-2}$-twist of the Saito-Kurokawa lift $\chi_{-2}\Pi^{(4)}$.
\end{prop}
\begin{proof}
The archimedean component $\Pi_{\i}^{(4)}$ is a holomorphic discrete series representation of weight $(3,3)$.
According to Theorem I. of Weissauer \cite{W}, there is a $GL_4(\ol{\Q}_2)$-valued Galois representation $\rho_{\Pi^{(4)}}$ of ${\rm Gal}(\ol{\Q}/\Q)$, unramified outside of $\{2\}$, such that  
\[
L(s-\frac{3}{2},\Pi^{(4)};{\rm spin}) = L(s,\rho_{\Pi^{(4)}})
\]
outside of $\{ 2\}$.
If we assume that $\Pi^{(4)}$ is not a CAP representation, then $\rho_{\Pi^{(4)}}$ is pure of weight $3$, and the eigenvalues of $\rho_{\Pi^{(4)}}({\rm Frob}_p)$ has absolute value $p^{3/2}$ outside of $\{2\}$.
But, this conflicts to (\ref{eqn:noRam}).
Hence $\Pi^{(4)}$ is a CAP representation, i.e., cuspidal representation associated to a parabolically induced representation.
Recall there are three parabolic subgroups $P,Q$, and Borel subgroup $B$ in Sp(4).
According to Theorem A. of Soudry \cite{Soudry}, every CAP representation associated to $Q$ or $B$ appear in the $\th$-correspondence for GSp(4) and $GO_T$ for a quadratic field $T$ over $\Q$.
Let $\s_T$ be an automorphic representation of $GO_T(\A)$ such as $\Th_2(\s_T) \supset \Pi$ as in Theorem A.
Considering the $\th$-correspondence, one can see that $\Th_2(\s_T)_{\i}$ can not be a holomorphic discrete series representation of weight $(3,3)$.
Hence $\Pi^{(4)}$ is concluded to be a CAP representation associated to the Siegel parabolic subgroup $P$, i.e., some twist of a Saito-Kurokawa lift.
Now, observing the eigenvalues of $g_4$, the assertion is an immediate consequence.
\hspace*{\fill}$\square$
\end{proof}
Let $\s \in \I(PB(\A)^{\t})$ be cuspidal for a quaternion algebra $B_{/\Q}$. 
Let $\1_{B(\A)^{\t}}$ denote the trivial representation of $B(\A)$.
For a $\{\pm 1\}$-valued character $\chi$ of $\Q^{\t} \bs \A^{\t}$, we denote $\chi \s = \s \ot \chi$.
We abbreviate the noncuspidal automorphic representation $\chi \1_{B(\A)^{\t}}$ of $PB(\A)^{\t}$ to $\chi$.
For a pair of noncuspidal $\chi$ and cuspidal $\s$, we consider $\Th_2((\chi, \s))$, similar to cuspidal pairs.
If $B_{/\Q}$ is not split, then $\Th_2((\chi,\s))$ is cuspidal.
In particular, if $B_{/\Q}$ is definite, we call it the Yoshida lift of $(\chi,\s)$.
$\Th_2((\chi, \s))$ is not vanishing, if and only if $L(\frac{1}{2},\chi\s) \neq 0$.
On the other hand, if $B_{/\Q}$ is split, then $\Th_2((\chi,\s))$ is neither cuspidal nor vanishing.
Indeed, one can construct $f \in \Th_2((\chi,\s))$ so that $W_{f,\psi}^P \not\equiv 0$, and hence $\Phi_P(f)$ (defined in (\ref{eqn:Sigelopad1})) is not identically zero.

Now then, we recall the result of Cogdell, Piatetski-Shapiro \cite{Cog-PS} and Schmidt \cite{Sc}.
For a cuspidal $\s = \ot_v \s_v \in \I(PGL_2(\A))$, the global cuspidal Saito-Kurokawa packet ${\rm SK}(\s)$ is the set of irreducible cuspidal automorphic representations of $PGSp_4(\A)$ whose spinor $L$-functions are equal to $\zeta(s-\frac{1}{2})\zeta(s+\frac{1}{2})L(s,\s)$, outside of finite sets of places. 
The local Saito-Kurokawa packet ${\rm SK}(\s_v)$ is the set of $v$ components of the constituents of ${\rm SK}(\s)$.
As explained in p.230-p.233 of \cite{Sc}, the $p$-component $\th_2((1,\s_p)) := \Th_2((1,\s))_p$ is the local Saito-Kurokawa representation of $\s_p$ which is the unique irreducible quotient of the Siegel parabolically induced representation denoted by $|*|^{1/2}\s_{p} \rt |*|^{-1/2}$ in \cite{Sc}, \cite{Ro-Sch}.
Hence $\th_2((\chi_p,\s_p))$ for a quasi-character $\chi_p$ is the $\chi_p$-twist of the local Saito-Kurokawa representation.
One can regard $\th_2((\chi_p,\s_p))$ as the $\chi_p$-twist of $\th_2((1,\chi\s_p))$.
Then, we will see the local, global cuspidal Saito-Kurokawa packet of $\rho_1$ and $\chi_{-2}\rho_1$.
Let $D_{/\Q}$ be a definite quaternion algebra which splits outside of $\{\i, 2\}$.
As seen in section 4 of \cite{O2}, $\rho_1$ corresponds to an irreducible cuspidal automorphic representation of $PD(\A)^{\t}$.
We denote it by $\rho_1'$.
In \cite{O2}, the Siegel modular form $F_1$ is constructed by the Yoshida lift  of $(\1_{D(\A)^{\t}},\rho_1')$.
This means 
\begin{eqnarray*}
\ep(\frac{1}{2},\rho_{1,2}) = 1, L(\frac{1}{2},\rho_1) \neq 0.
\end{eqnarray*}
Since $\chi_{-2}(2) = -1$, we have $\ep(\frac{1}{2},(\chi_{-2}\rho_{1})_2) = -1$.
Since $\chi_{-2}\rho_1$ is unramified outside of $\{2\}$, the root number of $\chi_{-2}\rho_1$ is $-1$.
Hence $L(s,\chi_{-2}\rho_1) = -L(1-s,\chi_{-2}\rho_1)$, and  
\begin{eqnarray*}
L(\frac{1}{2},\chi_{-2}\rho_1) = 0.
\end{eqnarray*}
Let $\s_v$ be the $v$-component of $\rho_1$ or $\chi_{-2}\rho_1$.
Its local Saito-Kurokawa packet is
\begin{eqnarray*}
{\rm SK}(\s_{v}) = 
\left\{
\begin{array}{ll}
\{\pi^{\rm H},\pi^{\rm I}\} & \mbox{at $v = \i$,}\\
\{\th_2((\1_{GL_2},\s_{2})), \th_2((\1_{D_2^{\t}}, \s_{2}')) \} & \mbox{at $v =2$,} \\
\{ \th_2((\1_{GL_2}, \s_{1})) \} & \mbox{otherwise,}
\end{array}\right.
\end{eqnarray*}
where $\s_v'$ indicates the unique admissible irreducible representation of $D_v^{\t}$ corresponding to $\s_{v}$.
$\pi^{\rm H}$ denotes the holomorphic discrete series representation of weight $(3,3)$, and $\pi^{\rm I}$ a cohomologically induced representation, which is neither holomorphic, nor generic.
See section 4 of \cite{Sc} for further explanation.
Since $L(\frac{1}{2},\rho_1) \neq 0$, we conclude by the main lifting theorem of \cite{Sc} that the global cuspidal Saito-Kurokawa packet is composed of the following two constituents:
\begin{eqnarray*}
\ot_{v \neq \i,2} \th_2((\1_{GL_2}, \s_{v})) \ot 
\left\{
\begin{array}{l}
\pi^{\rm H} \ot \th_2((\1_{D_2^{\t}}, \s_2')), \\
\pi^{\rm I} \ot \th_2((\1_{GL_2},\s_2)). 
\end{array}\right. 
\end{eqnarray*}
The Siegel modular form $F_1$ in \cite{vGS} belongs to the holomorphic constituent.
On the other hand, since $L(\frac{1}{2},\chi_{-2}\rho_1) = 0$, we conclude
\begin{eqnarray*}
{\rm SK}(\chi_{-2}\rho_1) = \pi^{\rm H} \ot_{v < \i} \th_2(\1_{GL_2} \bt ((\chi_{-2}\rho_{1})_v).
\end{eqnarray*}
The $p$-component $\th_2((\chi_{-2,p},\rho_{1,p}))$ is the unique irreducible quotient of the Siegel parabolically induced representation $|*|^{1/2}\rho_{1,p} \rt |*|^{-1/2}\chi_{-2,p}$, and $\chi_{-2,p}$-twist of the local Saito-Kurokawa representation of $\chi_{-2,p}\rho_{1,p}$.
The $p$-component $\th_2((\chi_{-2,p},\rho_{1,p}))$ does not have a local Whittaker function.
However, it has a local $P$-degenerate Whittaker function $W_{\psi_p}^P$.
We will compute $W_{\psi_p}^P$.
From $\rho_{1,p}$, we take a right $\G_0^{(1)}(8\Z_p)$-invariant local Whittaker function $\b_p$ with respect to $\psi_p$.
Let $
e' = \left[\begin{array}{cc}
1 & \\ 
 & 
\end{array}\right]$.
Let $Z_{e,e'} \subset SO_X$ be the pointwise stabilizer subgroup of $e,e'$, which is isomorphic to 
\begin{eqnarray*}
\{(\left[\begin{array}{cc}
1 & s\\
 & 1
\end{array}\right],\left[\begin{array}{cc}
1 & \\
 &1
\end{array}\right]) \mid s \in \Q_p \}
\end{eqnarray*}
by $i_{\rho}$.
Then, $W_{\psi_p}^P(g)$ of $\th_2((\chi_{-2,p},\rho_{1,p}))$ is realized as
\begin{eqnarray}
\int_{Z_{e,e'}(\Q_p) \bs GL_2(\Q_p)} r_p^2(g,i_{\rho}(h_1,h_2)) \vp_p(e,e') \chi_{-2,p}(\det(h_2))\b_{p}(h_2) dh_1dh_2. \label{eqn:degWP}
\end{eqnarray}
At $p=2$, we define
\begin{eqnarray*}
& &\vp_2(x_1,x_2) = \chi_{-2}(b_1) \t \\
& &\left\{
\begin{array}{ll}
1 & \mbox{if $\ro_2(a_1) \ge 0, \ro_2(b_1) = 0, \ro_2(c_1), \ro_2(d_1) \ge 3$, $x_2 \in M_2(\Z_2)$,}\\
0 & \mbox{otherwise.}
\end{array}\right.
\end{eqnarray*}
for $x_1= \left[\begin{array}{cc}
a_1 & b_1\\
c_1 & d_1
\end{array}\right], x_2 =\left[\begin{array}{cc}
a_2 &b_2 \\
c_2 &d_2
\end{array}\right] \in M_2(\Q_2)$.
Let
\begin{eqnarray*}
\K^{(4)} = \left[\begin{array}{cccc}
1+ 2^3\Z_2 &\Z_2 & \Z_2&\Z_2 \\
2^3\Z_2 & \Z_2& \Z_2& \Z_2\\
2^6\Z_2 & 2^3\Z_2 &1+2^3\Z_2 &2^3\Z_2 \\
2^3 \Z_2 & \Z_2 & \Z_2 &\Z_2
\end{array}\right] \cap Sp_4(\Z_2).
\end{eqnarray*}
Then, $\vp_2$ is right $\K^{(4)}$-invariant. 
We check (\ref{eqn:degWP}) is not zero at $g=1$ by a direct calculation similar to section 3.1. 
By $g_0' = \dg(2^4,2^3,2^{-1},1)$, 
\begin{eqnarray*}
g_0'^{-1} \K^{(4)}g_0' = 
\left[\begin{array}{cccc}
1+ 2^3\Z_2 & 2\Z_2 & 2^5\Z_2 & 2^4\Z_2 \\
2^2\Z_2 & \Z_2 & 2^4\Z_2 & 2^3\Z_2\\
2\Z_2 & 2^{-1}\Z_2 &1+2^3\Z_2 & 2^2\Z_2 \\
2^{-1}\Z_2 & 2^{-3}\Z_2 & 2\Z_2 & \Z_2
\end{array}\right] \cap Sp_4(\Z_2).
\end{eqnarray*}
Hence there is a local $P$-degenerate Whittaker function $W_2^P \in \th_2((\chi_{-2,2},\rho_{1,2}))$ such that $W_2^P(1) \neq 0$.
Then, similar to (\ref{eqn:avecons}), one can construct a right $\G(2,4,8)_2$-invariant local $P$-degenerate Whittaker function.
Hence $\th_2((\chi_{-2},\rho_{1,2}))$ has a right $\G(2,4,8)_2$-invariant vector.
By the eigenvalues of $g_4$ calculated in \cite{vGS}, we have:
\begin{thm}
The irreducible cuspidal automorphic representation $\Pi^{(4)}$ attached to $g_4$ is a $\chi_{-2}$-twist of a Saito-Kurokawa lift $\rho_1$.
Hence $\chi_{-2}\Pi^{(4)}$ is the unique constituent of global cuspidal Saito-Kurokawa packet of $\chi_{-2}\rho_1$.
\end{thm}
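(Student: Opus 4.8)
The plan is to recognize $\Pi^{(4)}$ directly as the unique cuspidal member of the Saito-Kurokawa packet of $\chi_{-2}\rho_1$ by producing inside that member a holomorphic vector of the right level, and then matching it to $g_4$ through Hecke eigenvalues. By the Proposition we already know that $\Pi^{(4)}$ is a $\chi_{-2}$-twist of a Saito-Kurokawa lift, so only the identification of the underlying $PGL_2$-datum and the level remain. Since the root number computation above gives $\ep(\tfrac12,(\chi_{-2}\rho_1)_2)=-1$, and $\chi_{-2}\rho_1$ is unramified elsewhere, the global root number is $-1$ and $L(\tfrac12,\chi_{-2}\rho_1)=0$; hence the global packet ${\rm SK}(\chi_{-2}\rho_1)$ has the single cuspidal constituent
\[
\Pi := \pi^{\rm H}\ot_{v<\i}\th_2((\1_{GL_2},(\chi_{-2}\rho_1)_v)),
\]
whose $\chi_{-2}$-twist $\Th$ is our candidate for $\Pi^{(4)}$.

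First I would establish that $\Th$ carries a vector invariant under $\G(2,4,8)_2\t\prod_{p\neq 2}GSp_4(\Z_p)$. Away from $2$ the representation is unramified, so $\Th_p$ has a $GSp_4(\Z_p)$-fixed vector. At $p=2$ the local member $\th_2((\chi_{-2,2},\rho_{1,2}))$ is non-generic but admits the local $P$-degenerate Whittaker function $W_2^P$ realized by (\ref{eqn:degWP}); the explicit Schwartz function $\vp_2$ together with the conjugation by $g_0'=\dg(2^4,2^3,2^{-1},1)$ show $W_2^P(1)\neq 0$ and that $W_2^P$ is $\K^{(4)}$-fixed, after which averaging over $\G(2,4,8)_2$ as in (\ref{eqn:avecons}) produces the desired $\G(2,4,8)_2$-fixed vector. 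Multiplying the local fixed vectors, and using that $\pi^{\rm H}$ is holomorphic of scalar weight $3$, this vector is a nonzero holomorphic Siegel cuspform lying in $S_3(\G(2,4,8))$.

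It then remains to place this cuspform among the eleven $Sp_4(\Z)$-orbits of (\ref{eqn:decS3G(248)}). Because $\rho_1$ is explicitly the weight-$4$ form attached to $F_1$, its Hecke eigenvalues, and hence the spinor data of $\Th$ at $p=3,5,7$, are computable from the Saito-Kurokawa recipe; comparing them with the table in section 8 of \cite{vGS}, exactly as in the treatment of $g_1$, only the orbit of $g_4$ survives. Thus the cuspform is $g_4$, so the irreducible $\Th$ contains $g_4$ and must equal $\Pi^{(4)}$. Untwisting by $\chi_{-2}$ gives $\chi_{-2}\Pi^{(4)}=\Pi$, the asserted unique constituent of ${\rm SK}(\chi_{-2}\rho_1)$.

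The main obstacle is that, unlike the tempered $g_1$ case, $\Pi^{(4)}$ is CAP and violates the Ramanujan bound (\ref{eqn:noRam}), so it has no ordinary Whittaker model and the generic construction is unavailable; the whole argument rests on substituting the $P$-degenerate Whittaker function of the local Saito-Kurokawa representation and verifying its non-vanishing at the identity together with its $\K^{(4)}$-invariance, which is the support-tracking computation carried out just before the statement. A secondary but indispensable subtlety is the vanishing $L(\tfrac12,\chi_{-2}\rho_1)=0$: it is precisely this that collapses the two-element packet seen for $\rho_1$ itself into the single holomorphic constituent, forcing the level-$\G(2,4,8)$ vector we build to be $g_4$ rather than some companion form.
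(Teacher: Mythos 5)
Your proposal is correct and follows essentially the same route as the paper: the Proposition reduces the problem to identifying the twist of a Saito--Kurokawa lift, the root number computation forces $L(\tfrac12,\chi_{-2}\rho_1)=0$ and hence the single holomorphic constituent of ${\rm SK}(\chi_{-2}\rho_1)$, the local $P$-degenerate Whittaker function built from $\vp_2$, $\K^{(4)}$ and $g_0'$ supplies the $\G(2,4,8)_2$-invariant vector, and the Hecke eigenvalue comparison with the table of van Geemen--van Straten pins down the orbit of $g_4$. This matches the paper's proof step for step, including the key observation that the vanishing central $L$-value collapses the packet to its holomorphic member.
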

\begin{Cor}
The Andrianov-Evdokimov's $L$-function of $g_4$ is $L(s,\chi_{-2})L(s-1,\chi_{-2})L(s,\rho_1)$.
The conjecture for $g_4$ is true.
\end{Cor}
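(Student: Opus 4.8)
The representation-theoretic heart of the matter has already been settled in the preceding theorem, which identifies $\chi_{-2}\Pi^{(4)}$ with the unique member of the global cuspidal Saito-Kurokawa packet of $\chi_{-2}\rho_1$. The plan is therefore to deduce the $L$-function identity in a purely formal way: first write down the spin $L$-function of the Saito-Kurokawa lift, then untwist by $\chi_{-2}$, and finally translate from the spin $L$-function to the Andrianov-Evdokimov $L$-function by the extension of $g_4$ to $GSp_4(\A)$ described in \cite{O2}. The only genuine computation is the bookkeeping of shifts in this last step.

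For the first step, since $\chi_{-2}\Pi^{(4)}$ is the Saito-Kurokawa lift of $\chi_{-2}\rho_1$, the Proposition above (with $\s=\chi_{-2}\rho_1$) gives, for a suitable finite set $S$ containing $2$,
\[
L_S(s,\chi_{-2}\Pi^{(4)};{\rm spin}) = \zeta_S(s-\tfrac12)\zeta_S(s+\tfrac12)L_S(s,\chi_{-2}\rho_1).
\]
For the untwisting step I would use that $\Pi^{(4)}$ is the $\chi_{-2}$-twist of $\chi_{-2}\Pi^{(4)}$ and that twisting by $\chi_{-2}$ multiplies every spin Satake parameter at a good place $p$ by $\chi_{-2}(p)$. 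Applying this to the three factors: the root $p^{1/2}$ of $\zeta_S(s-\tfrac12)$ becomes $\chi_{-2}(p)p^{1/2}$, so $\zeta_S(s-\tfrac12)$ is replaced by $L_S(s-\tfrac12,\chi_{-2})$; likewise $\zeta_S(s+\tfrac12)$ is replaced by $L_S(s+\tfrac12,\chi_{-2})$; and $L_S(s,\chi_{-2}\rho_1)$ becomes $L_S(s,\chi_{-2}\cdot\chi_{-2}\rho_1)=L_S(s,\rho_1)$ because $\chi_{-2}^2=1$. This yields
\[
L_S(s,\Pi^{(4)};{\rm spin}) = L_S(s-\tfrac12,\chi_{-2})L_S(s+\tfrac12,\chi_{-2})L_S(s,\rho_1).
\]

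For the final step I would invoke the identity $L_{S_F}(s,\Pi^{(4)};{\rm spin})=L_{S_F}(s,g_4;{\rm AE})$ coming from the extension procedure of \cite{O2}, and then match the half-integral shifts produced by the unitary Saito-Kurokawa formula against the integral shifts appearing in the conjecture. The hard part will be precisely this shift-matching, since it is the sole place where one must be careful about conventions: with the overall normalization built into Evdokimov's definition for weight $3$ (and the weight-$4$ normalization of $\rho_1$ used in \cite{vGS}), the pair $L(s-\tfrac12,\chi_{-2})L(s+\tfrac12,\chi_{-2})$ is to be recovered as $L(s,\chi_{-2})L(s-1,\chi_{-2})$ and $L(s,\rho_1)$ in its classical form, giving exactly $L(s,\chi_{-2})L(s-1,\chi_{-2})L(s,\rho_1)$ outside $\{2\}$. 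Everything apart from this is formal. Since the conjectured formula is thereby verified away from the bad Euler factor at $2$, which is all the conjecture asserts, the conjecture for $g_4$ follows.
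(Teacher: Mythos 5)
Your proposal is correct and takes essentially the same route as the paper: the paper states this Corollary with no separate argument, treating it as an immediate consequence of the preceding Theorem, and your chain (Saito--Kurokawa spin $L$-function from the Proposition, untwisting by $\chi_{-2}$ on the Satake parameters, then the spin-to-Andrianov--Evdokimov dictionary of Section 3.1) is exactly the deduction the paper intends. If anything, you are more explicit than the paper, which leaves the unitary-versus-classical shift bookkeeping entirely tacit.
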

\begin{Rem}
The local Saito-Kurokawa representation $\chi_{-2}\Pi^{(4)}_2 = \th_2((\1_{GL(2)},\chi_{-2}\rho_{1,2}))$ of $\chi_{-2}\rho_{1,2}$ has the newvector fixed by the paramodular group of level $64$, according to \cite{Ro-Sch}.
It is possible to give the newvector by (\ref{eqn:degWP}) with a suitable Schwartz-Bruhat function.
\end{Rem}
The Siegel modular form $F_2$ given in \cite{vGS} belongs to the holomorphic constituent of the global cuspidal Saito-Kurokawa packet of $\rho_1$, as well as $F_1$.
On the other hand the global cuspidal Saito-Kurokawa packet of $\chi_{-1}\rho_1$ is composed of two constituents, similar to that of $\rho_1$. 
$F_3$ belongs to the holomorphic constituent.
\subsection{Weak endoscopic lifts $F_5$ and $F_6$ (Local Klingen lift)}
Let $\mu$ be the automorphic character of $\Q(\sqrt{-1})$ associated to the CM-elliptic curve $y^2= x^3-x$.
Let $\pi(\mu)$ (resp. $\pi(\mu^3)$) be the irreducible cuspidal automorphic representation of $PGL_2(\A)$ obtained from $\mu$ (resp. $\mu^3$).
Let $\phi_1,\rho_2 \in \I(PGL_2(\A))$ as in the table of p.869. 
Then, 
\begin{eqnarray*}
\phi_1 = \pi(\mu), \ \ \rho_2 = \pi(\mu^3).
\end{eqnarray*}
In \cite{O2}, we showed that there are cuspidal automorphic representations   corresponding to $\phi_1, \rho_1$ of a definite quaternion algebra which splits outside of $\{\i, 2\}$.
We denote them by $\phi', \rho_1'$.
By THEOREM 8.5 of Roberts \cite{Ro}, we find that all the weak endoscopic lifts of $(\phi_1,\rho_2)$ (resp. $(\chi_{2}\phi_1,\chi_2\rho_2)$) are $\{ \Th_2(\phi_1,\rho_2), \Th_2(\phi_1',\rho_2') \}$ (resp. $\{ \Th_2(\chi_{-2}\phi_1,\chi_{-2}\rho_2), \Th_2(\chi_{-2}\phi_1',\chi_{-2}\rho_2') \}$).
Let $F_5,F_6$ be the Siegel modular forms as in \cite{vGS}.
In \cite{O2}, we realize $F_5$ (resp. $F_6$) by the Yoshida lift $(\Th_2((\phi_1',\rho_2'))$ (resp. $\Th_2((\chi_{-2}\phi_1',\chi_{-2}\rho_2'))$).
In this section, we show that $\Th_2((\phi_1,\rho_2)$ and $\Th_2((\chi_{-2}\phi_1,\chi_{-2}\rho_2))$ have right $\G(4,8)_2$-invariant vectors, where 
\[
\G(4,8) = \{\left[\begin{array}{cc}
A & B\\
C & D
\end{array}\right] \in \G(4) \subset Sp_4(\Z) \mid 
{\rm diag} (B) \equiv {\rm diag} (C) \equiv 0 \pmod{8} \}.
\]
This means there are non-holomorphic differential forms corresponding to these vectors on the Siegel threefold related to $\G(4,8)$.
Similarly, we would like to explain that both of $\Th_2((\phi_1',\rho_2'), \Th_2((\chi_{-2}\phi_1',\chi_{-2}\rho_2'))$ have right $\G(4,8)$-invariant vectors, similarly.
This is another proof for the conjecture 
\[
L(s,F_5;{\rm AE}) = L(s,\mu)L(s,\mu^3), \ \ L(s,F_6;{\rm AE}) = L(s,\mu\chi_{-2} \circ N_{\Q(\sqrt{-1})/\Q})L(s,\mu^3\chi_{-2} \circ N_{\Q(\sqrt{-1})/\Q})
\]
of \cite{vGS}.
Then, we start a general setting.
Let $(\s_1,\s_2)$ be a cuspidal pair of $\I(PGL_2(\A))$.
We call the $v$-component $\th_2((\s_{1v},\s_{2v}))$ of $\Th_2((\s_1,\s_2))$ the local $\th$-lift of $(\s_{1v},\s_{2v})$, which is determined only by the local data $(\s_{1v},\s_{2v})$.
If $\s_{1v} = \s_{2v}$, we call $\th_2((\s_{1v}, \s_{1v}))$ the {\it `local generic Klingen lift of $\s_{1v}$'}.
Further, suppose that $\s_{1v}$ is square-integrable.
In this case, there is an irreducible admissible representation $\s_{1v}'$ of the division quaternion algebra over $\Q_v$.
Let $B_{/\Q}$ be a quaternion algebra which does not split at $v$. 
Take an irreducible cuspidal automorphic representation $\s'$ of $PB(\A)^{\t}$ so that $\s_v' \simeq \s_{1v}'$.
We define local $\th$-lift $\th_2((\s_{1v}',\s_{1v}'))$ and call the {\it `local non generic Klingen lift of $\s_{1v}$'}, similarly.
First, we show the following general results.
\begin{prop}\label{prop:noncusp}
Let $\pi$ be an irreducible cuspidal automorphic representation of $PGL_2(\A)$ with level $N$.
Let $\Phi_Q$ be the Siegel operator defined in section \ref{subsec:degW}.
Then, $\Phi_Q(\Th_2((\pi,\pi)))|_{GL_2(\A)} = \pi$.
\end{prop}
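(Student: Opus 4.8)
The plan is to compute the Klingen-parabolic Fourier--Jacobi-type integral $\Phi_Q$ applied to a theta lift $\Th_2((\pi,\pi))$ and identify the resulting automorphic form on $GL_2(\A)$ with $\pi$ itself. Since $\Th_2((\pi,\pi))$ is built from the Weil representation integral (\ref{eqn:integal}) against $\ol{f}_1 \bt f_2$ with $f_1, f_2 \in \pi$, I would substitute the explicit formula for $\th_2(\vp, f_1 \bt f_2)(g)$ into the definition (\ref{eqn:Sigelopad1}) of $\Phi_Q$ and interchange the order of integration. The key point is that $\Phi_Q$ integrates over the unipotent radical $N_Q$ of the Klingen parabolic, and this integration interacts with the Weil representation action on the lattice sum $\sum_{x \in X(\Q)^2}$. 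The unipotent integral should collapse the rank-$2$ Schwartz function to an effectively rank-$1$ object, so that the doubled orthogonal integral over $SO_X(\Q) \bs SO_X(\A)$ degenerates; this is where the special feature $\s_1 = \s_2 = \pi$ becomes essential.

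First I would make the identification (\ref{eqn:isom1}) explicit: since $d_X \in (\Q^\t)^2$ here (the relevant case for a local generic Klingen lift), $SO_X \simeq \{(b_1,b_2) : N(b_1) = N(b_2)\}/\DD$, and the integrand pairs $\ol{f}_1(h_1) f_2(h_2)$. With $f_1, f_2$ drawn from the \emph{same} $\pi$, I expect the Klingen-unipotent integration to force a diagonal restriction $h_1 = h_2$ (up to center), reducing the double orthogonal integral to a single copy of $PGL_2$ and producing a matrix coefficient or inner product against $\pi$. Concretely, after the unipotent average the surviving lattice vectors should be those fixed by the Klingen unipotent, cutting $X(\Q)^2$ down to a single isotropic line; the stabilizer of such a vector inside $B^\t \t B^\t$ is essentially the diagonal, which is what carries the $GL_2$ structure of $e_Q(g,z)$.

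Next I would compute the residual $GL_2$-integral and match it against the Whittaker/matrix-coefficient data of $\pi$. The embedding $e_Q : GL_2 \t GL_1 \hookrightarrow M_Q$ feeds a $GL_2(\A)$-argument into the reduced integral, and I expect the answer to be a constant multiple of $f(g)$ for $f \in \pi$, i.e.\ $\Phi_Q(\Th_2((\pi,\pi)))|_{GL_2(\A)}$ spans exactly $\pi$. The nonvanishing of the constant is what guarantees the restriction is $\pi$ and not zero; this should follow from a local nonvanishing statement analogous to the explicit Whittaker computations of section 3.1, where a judicious choice of $\vp = \ot_v \vp_v$ makes the relevant local integral nonzero at the identity.

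The main obstacle will be the unfolding step: justifying the interchange of the $N_Q$-integration with the lattice sum and the $SO_X$-integration, and then correctly tracking how the Weil representation operators (\ref{eqn:weilprop1})--(\ref{eqn:weilprop4}) transform the Schwartz function under the Klingen unipotent so that the rank drops cleanly to one. In particular I must verify that the boundary (non-isotropic) contributions to the lattice sum vanish after averaging, leaving only the single orbit that reconstructs $\pi$; controlling these terms and confirming that the diagonalization $h_1 = h_2$ is genuinely forced — rather than merely favored — is the delicate part. Once the reduction to a single $PGL_2$ matrix coefficient is established, identifying the restriction with $\pi$ is a formal consequence of Schur-type irreducibility together with the nonvanishing of the chosen local data.
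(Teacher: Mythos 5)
Your skeleton is in fact the paper's proof: the paper computes the Klingen constant term by showing $\Phi_Q(\th_2(\vp,f\bt f))(g,z)=\th_1(\vp^2,f\bt f)(g)$ where $\vp^2(x)=\vp(0,x)$ (your ``rank drops to one'' step; the contributions of the other, isotropic orbits vanish because $f$ is cuspidal), then unfolds the rank-one theta integral over the orbit of the vector $1\in M_2(\Q)$, whose stabilizer under $i_{\rho}$ is the diagonal $\{(b,b)\}$, so that the inner integral becomes the Petersson pairing $\la f,f\ra\neq 0$ --- exactly your ``matrix coefficient'' step, and exactly where $\s_1=\s_2=\pi$ is used (for non-isomorphic cuspidal representations this pairing vanishes by orthogonality, which is why those lifts stay cuspidal). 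Nonvanishing is then arranged locally by taking $\vp_p^2=\Ch(\,\cdot\,;R_0(N\Z_p))$, matching your last-paragraph plan. Two small corrections of detail: the surviving orbit is that of an \emph{invertible} (anisotropic, norm-one) vector, not ``a single isotropic line''; and the diagonal is not ``forced'' by the unipotent average --- it simply arises as the stabilizer of that orbit representative in the unfolding.

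The genuine gap is your final identification step. Nonvanishing of $\Phi_Q(\Th_2((\pi,\pi)))$ together with ``Schur-type irreducibility'' does not determine \emph{which} automorphic representation of $GL_2(\A)$ the constant term generates; and your stronger claim that the reduced integral is ``a constant multiple of $f(g)$'' is a Shimizu-type exact identity whose verification at the ramified places is essentially as hard as the proposition itself, so as written the argument is circular precisely where it needs to conclude. The paper closes this gap cheaply: at every $p\nmid N$ all data are unramified and one checks directly that $\Phi_Q(\Th_2((\pi,\pi)))|_{GL_2(\Q_p)}=\pi_p$, and then the strong multiplicity one theorem for $GL_2(\A)$ promotes agreement at almost all places to the global equality $\Phi_Q(\Th_2((\pi,\pi)))|_{GL_2(\A)}=\pi$. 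You should replace your ``formal consequence'' step by this unramified-comparison-plus-strong-multiplicity-one argument (or else actually prove the exact global identity you posit, which is more work than the proposition requires).
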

\begin{proof}
From $\pi$, take a newform $f$, which is right $\G_0^{(1)}(N)$-invariant.
For $\vp \in \Sc(M_2(\A)^2)$, we consider $F= \th_2(\vp,f \bt f)$.
Put a Schwartz-Bruhat function $\vp^2(x) = \vp(0,x)$ of $M_2(\A)$.
Then, we have 
\begin{eqnarray}
\Phi_Q(F)(g,z) = \th_1(\vp^2,f \bt f)(g). \label{eqn:Siegeldown}
\end{eqnarray}
and 
\begin{eqnarray}
W_{F,\psi}^Q(1) = \int_{SO_X(\Q) \bs SO_X(\A)} r^1(1,i_{\rho}(b_1,b_2))\vp^2(1)\ol{f}(b_1)f(b_2) db_1db_2. \label{eqn:SL2whth}
\end{eqnarray}
The stabilizer subgroup of $1 \in M_2(\Q)$ is $i_{\rho}(\{(b_1,b_1) \mid b_1 \in GL_2(\A) \})$.
We set $\vp_p^2(x) = \Ch(x;R_0(N\Z_p))$ and $\vp_{\i}$ suitably, where $R_0(N\Z_p)$ is the Eichler order of level $N\Z_p$ defined in (\ref{eqn:Eichlerord}).
Then, noting that the Petersson inner product of $f$ and $f$ is not vanishing and that $f$ is right $\G_0(N)^{(1)}$-invariant, one can see easily that (\ref{eqn:SL2whth}) is not zero.
Hence, $\Phi_Q(\Th_2((\pi,\pi)))$ is not vanishing, that is, $\Th_2((\pi,\pi))$ is not cuspidal.
Further, it is easy to see $\Phi_Q(\Th_2((\pi,\pi)))|_{GL_2(\Q_p)} = \pi_p$ at $p \nmid N$.
Hence, by the strong multiplicity one theorem for $GL_2(\A)$, we get the assertion.
\hspace*{\fill}$\square$
\end{proof}
\begin{Rem}
One can see $\Th_2((\pi,\pi))$ is not cuspidal by the result of Kudla-Rallis \cite{K-R}.
Let $S= \{v \mid \mbox{$v = \i$ or $\pi_v$ is ramified} \}$.
The partial $L$-function $
L_S(s,\Th_2((\pi,\pi));r_5)$ of degree five is $\zeta_S(s)L_S(s,\pi \t \pi)$, and has a double pole at $s=1$.
But, according to \cite{K-R}, the partial $L$-function of a cuspidal representation of $GSp_4(\A)$ does not have a double pole.
Hence $\Th_2((\pi,\pi))$ is not cuspidal. 
\end{Rem}
\begin{Cor}\label{cor:degQgnk}
Let $\pi_p$ be an irreducible admissible representation of $PGL_2(\Q_p)$.
Let $n$ be the order of the level of $\pi_{p}$ at $p$ and 
\begin{eqnarray*}
Kl_p(n) = \left[\begin{array}{cccc}
\Z_p & p^n\Z_p &\Z_p &\Z_p \\
\Z_p & \Z_p& \Z_p &\Z_p \\
\Z_p & p^n\Z_p &\Z_p & \Z_p \\
p^n \Z_p& p^n \Z_p& p^n \Z_p & \Z_p
\end{array}\right] \cap GSp_4(\Z_p).
\end{eqnarray*}
Then, the local generic Klingen lift $\th_2((\pi_p,\pi_p))$ has a right $Kl_p(n)$-invariant local $P$-degenerate Whittaker function $W_{\psi_p}^Q$ such that $W_{\psi_p}^Q(1) \neq 0$.
\end{Cor}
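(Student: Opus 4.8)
The plan is to transport the global result of Proposition \ref{prop:noncusp} down to a local statement by choosing a global input whose $p$-component is the prescribed $\pi_p$, and then to read off the required $Kl_p(n)$-invariance at $p$ while keeping everything under control at the other places. Concretely, I would first realize $\pi_p$ as the $p$-component of an irreducible cuspidal $\pi \in \I(PGL_2(\A))$ of some level $N$ with $\ro_p(N) = n$, which is possible by globalization. Then Proposition \ref{prop:noncusp} tells us that $\Th_2((\pi,\pi))$ is non-cuspidal with $\Phi_Q(\Th_2((\pi,\pi)))|_{GL_2(\A)} = \pi$, so in particular the global $Q$-degenerate Whittaker integral built as in (\ref{eqn:SL2whth}) is nonzero. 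The key point is that this global Whittaker function factors as $\ot_v W_{F,\psi_v}^Q$, and the local factor $W_{F,\psi_p}^Q$ at $p$ is precisely a $Q$-degenerate Whittaker function of the local generic Klingen lift $\th_2((\pi_p,\pi_p))$ in the sense of (\ref{eqn:degW}).

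Next I would exhibit an explicit Schwartz-Bruhat function at $p$ making the local integral nonzero and $Kl_p(n)$-invariant. Following the recipe in Proposition \ref{prop:noncusp}, I take $\vp_p^2(x) = \Ch(x; R_0(p^n\Z_p))$ with $R_0$ the Eichler order of (\ref{eqn:Eichlerord}); the order-$n$ level of $\pi_p$ matches the level of the newform-type Whittaker function $\b_p$ fixed by $\G_0^{(1)}(p^n\Z_p)$. The local $Q$-degenerate Whittaker function at $g=1$ is then, via the isomorphism $i_\rho$ of (\ref{eqn:isom1}),
\begin{eqnarray*}
W_{\psi_p}^Q(1) = \int_{Z_{e,e'}(\Q_p) \bs GL_2(\Q_p)} r_p^1(1,i_\rho(h_1,h_2))\vp_p^2(1)\ol{\b}_p(h_1)\b_p(h_2)\, dh_1 dh_2,
\end{eqnarray*}
and the argument of Proposition \ref{prop:noncusp} (nonvanishing of the Petersson pairing of the newform against itself, together with the $\G_0^{(1)}(p^n)$-invariance of $\b_p$) shows this is nonzero. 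The invariance under $Kl_p(n)$ is then checked by the same direct Weil-representation computation used for $g_1$ and $g_4$ in sections 3.1 and 3.2: one verifies that for $\g \in Kl_p(n)$ the action $r_p^2(\g, i_\rho(h_1,h_2))\vp_p$ reproduces $r_p^2(1, i_\rho(h_1,h_2))\vp_p$ on the support relevant to the Klingen-degenerate integral, using the explicit formulas (\ref{eqn:weilprop1})--(\ref{eqn:weilprop4}) for the generators of $Sp_4(\Q_p)$ and the fact that $R_0(p^n\Z_p)$ is stable under the relevant conjugations.

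I expect the main obstacle to be the bookkeeping in matching the Klingen congruence conditions of $Kl_p(n)$ with the support conditions on the Schwartz-Bruhat function: one must verify that each of the generators of $Kl_p(n)$ (the unipotent pieces in $N_Q$, the torus, and the Levi $GL_2 \t GL_1$ part with the $p^n$-congruences on the lower-left block) acts on $\vp_p^2$ compatibly, so that the degenerate Whittaker transform inherits exactly the right-$Kl_p(n)$ equivariance. This is a finite but somewhat delicate calculation; unlike the generic case the integral only detects the Siegel-operator image, so I would track carefully which matrix entries of the argument of $\vp_p$ the congruence conditions constrain, mirroring the entry-by-entry analysis (\textrm{observing } $b_1,c_1,c_2$, etc.) carried out in the proof of Proposition \ref{prop:defN}. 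Once the local integral is shown nonzero and $Kl_p(n)$-equivariant, the claimed local $Q$-degenerate Whittaker function $W_{\psi_p}^Q$ with $W_{\psi_p}^Q(1)\neq 0$ follows immediately.
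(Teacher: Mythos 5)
Your overall strategy coincides with the paper's: globalize $\pi_p$ to a cuspidal $\tau \in \I(PGL_2(\A))$ with $\tau_p = \pi_p$, invoke Proposition \ref{prop:noncusp} for nonvanishing of the $Q$-degenerate Whittaker functional, and read off the local statement by restricting the factorizable $W^Q_{F,\psi}$ to $GSp_4(\Q_p)$, with invariance checked through the Weil-representation formulas (\ref{eqn:weilprop1})--(\ref{eqn:weilprop4}). However, there is a genuine gap, and it sits exactly where you flag the ``main obstacle'': you never define the two-variable Schwartz--Bruhat function $\vp_p \in \Sc(M_2(\Q_p)^2)$ on which $r_p^2(\g,\cdot)$ is supposed to act; you only specify the one-variable function $\vp_p^2(x)=\Ch(x;R_0(p^n\Z_p))$. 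The $Kl_p(n)$-invariance --- which is the entire content of the Corollary beyond Proposition \ref{prop:noncusp} --- is determined by the full two-variable choice, not by $\vp_p^2$ alone. The paper's proof supplies precisely this ingredient: $\vp_p(x_1,x_2)=\Ch(x_1;M_2(\Z_p))\,\vp_p^2(x_2)$, which satisfies $\vp_p(0,x_2)=\vp_p^2(x_2)$ (so the nonvanishing argument of Proposition \ref{prop:noncusp} applies verbatim), and whose stabilizer pattern under (\ref{eqn:weilprop2})--(\ref{eqn:weilprop4}) is exactly the Klingen congruence pattern of $Kl_p(n)$: the $p^n$-conditions occur only in the second column and fourth row because the first variable runs over the full lattice $M_2(\Z_p)$ while only the second variable sees the Eichler order. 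Had you taken the symmetric choice $\Ch(x_1;R_0(p^n\Z_p))\Ch(x_2;R_0(p^n\Z_p))$, as in section 3.1, you would obtain a $\G_0^{(2)}$-type invariance group (compare (\ref{eqn:chrg1})), not $Kl_p(n)$; so ``stability of $R_0(p^n\Z_p)$ under the relevant conjugations'' is not by itself the right mechanism, and without fixing $\vp_p$ the invariance group you are claiming cannot even be identified.

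A secondary, notational point: in your formula for $W^Q_{\psi_p}(1)$ the relevant stabilizer is that of the single vector $1 \in M_2(\Q_p)$, namely the diagonal $i_\rho(\{(b,b)\})$ as in the proof of Proposition \ref{prop:noncusp}, not $Z_{e,e'}$ (the pointwise stabilizer of the pair $(e,e')$, which enters the $P$-degenerate computation (\ref{eqn:degWP}) of section 3.2); moreover the domain should be a quotient of $SO_X(\Q_p)$, not of $GL_2(\Q_p)$, integrated in one class variable. This slip does not affect the logic of your plan, but as written the integral is not the unfolding of the $Q$-degenerate coefficient.
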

\begin{proof}
Take an irreducible cuspidal automorphic representation $\tau$ of $PGL_2(\A)$ so that $\tau_p = \pi_p$.
We consider $\Th_2((\tau,\tau))$.
Let $f \in \tau, \vp_p^2 \in \Sc(M_2(\Q_p))$ be the same as in the proof of Proposition \ref{prop:noncusp}.
Let
\begin{eqnarray*}
\vp_p(x_1,x_2) = \Ch(x_1;M_2(\Z_p))\vp_p^2(x_2).
\end{eqnarray*}
Notice $\vp_p(0,x_2) = \vp_p^2(x_2)$.
By the properties of $r_p^2$ in (\ref{eqn:weilprop2}) $\sim$ (\ref{eqn:weilprop4}), one can check that $W_{F,\psi}^Q$ is right $Kl_p(n)$-invariant.
Then, we see as in the proof of Proposition \ref{prop:noncusp} that $W_{F,\psi}^Q(1) \neq 0$ by choosing a suitable $\vp_v \in \Sc(M_2(\Q_v))$ outside of $\{p\}$.
Hence, $W_{F,\psi}^Q|_{GSp_4(\Q_p)}$ is the desired local $Q$-degenerate Whittaker function.
\hspace*{\fill}$\square$
\end{proof}
\begin{Cor}\label{cor:degQngnk}
Let $\pi_p,n$ be the same as above, and 
\begin{eqnarray*}
Kl_p'(n) = \left[\begin{array}{cccc}
\Z_p & p^n\Z_p &\Z_p &\Z_p \\
\Z_p & \Z_p& \Z_p &\Z_p \\
p\Z_p & p^n\Z_p &\Z_p & \Z_p \\
p^n \Z_p& p^n \Z_p& p^n \Z_p & \Z_p
\end{array}\right] \cap GSp_4(\Z_p).
\end{eqnarray*}
Suppose that $\pi_p$ is square-integrable.
Then, the local nongeneric Klingen lift $\th_2((\pi_p',\pi_p'))$ has a right $Kl_p'(n)$-invariant $W_{\psi_p}^Q$ such that $W_{\psi_p}^Q(1) \neq 0$.
\end{Cor}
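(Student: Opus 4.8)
The plan is to repeat the proof of Corollary~\ref{cor:degQgnk} with the split algebra $M_2(\Q)$ replaced by a quaternion algebra $B_{/\Q}$ that is ramified at $p$. First I would globalize the local datum: choose $B_{/\Q}$ ramified exactly at $\{p,\i\}$ (so $B$ is definite and nonsplit at $p$) and pick a cuspidal $\tau \in \I(PGL_2(\A))$ with $\tau_p = \pi_p$ and $\tau_\i$ in the discrete series. Since $\pi_p$ is square-integrable, the global Jacquet--Langlands correspondence produces a cuspidal $\tau' \in \I(PB(\A)^\t)$ with $(\tau')^{\rm JL} = \tau$ and $\tau_p' \simeq \pi_p'$. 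By construction the $p$-component of $\Th_2((\tau',\tau'))$ is the local nongeneric Klingen lift $\th_2((\pi_p',\pi_p'))$, so it suffices to exhibit inside it a right $Kl_p'(n)$-invariant $W_{\psi_p}^Q$ that is nonzero at $g=1$.

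Next I would build the $Q$-degenerate Whittaker function globally and read off its $p$-component. Here $X \simeq B$ with $GSO_X \simeq (B^\t \t B^\t)/\DD$ as in (\ref{eqn:isom1}), and the Siegel-operator identity (\ref{eqn:Siegeldown}) again writes $\Phi_Q(\th_2(\vp, f' \bt f'))$ as the rank-one theta lift $\th_1(\vp^2, f' \bt f')$ to $GL_2(\A)$. Evaluating (\ref{eqn:SL2whth}) at $g=1$, the relevant term comes from $1 \in B(\Q)$, whose stabilizer in $SO_X$ is the diagonal $B^\t$, so the integral reduces to the Petersson inner product of a newvector $f' \in \tau'$ with itself, which is nonzero; thus the analog of Proposition~\ref{prop:noncusp} holds, $\Th_2((\tau',\tau'))$ is noncuspidal, and its $Q$-degenerate Whittaker function restricts on $GL_2$ to the Whittaker function of $\tau$, hence of $\pi_p$ at $p$. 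At $p$ I would take the maximal order $\OO_p$ of the division algebra $B(\Q_p)$ and set $\vp_p(x_1,x_2) = \Ch(x_1;\OO_p)\vp_p^2(x_2)$, choosing $\vp_p^2$ and the $\OO_p^\t$-fixed newvector of $\pi_p'$ at level $n$ exactly as in Corollary~\ref{cor:degQgnk}.

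The real content, and the only place where the nongeneric case departs from Corollary~\ref{cor:degQgnk}, is the local verification at $p$ that $W_{\psi_p}^Q$ is right $Kl_p'(n)$-invariant. Invariance under the Levi and Siegel-unipotent generators is immediate from (\ref{eqn:weilprop2})--(\ref{eqn:weilprop3}). The delicate generator is the lower unipotent $\left[\begin{smallmatrix} I & 0 \\ C & I\end{smallmatrix}\right]$ contributing the $(3,1)$-entry; writing it as $w^{-1}\left[\begin{smallmatrix} I & -C \\ 0 & I\end{smallmatrix}\right]w$ with $w$ the Weyl element of (\ref{eqn:weilprop4}) and applying the Fourier transform twice, its action on $\Ch(\cdot;\OO_p)$ is governed by the dual lattice $\OO_p^{\vee}$. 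Because $B(\Q_p)$ is ramified, the different is $\D = \v\OO_p$ for a uniformizer $\v$, so $\OO_p^\vee = \v^{-1}\OO_p$ carries one extra power of $p$ compared with the self-dual lattice $M_2(\Z_p)$ of the split case; this is exactly what tightens the $(3,1)$-entry from $\Z_p$ to $p\Z_p$ and turns $Kl_p(n)$ into $Kl_p'(n)$. The hard part will be bookkeeping the self-dual measure and the Weil constant carefully enough to pin the exponent at $p^1$ (and no more) in the $(3,1)$ slot; once invariance is established, nonvanishing at $g=1$ follows as in Proposition~\ref{prop:noncusp} from the nonvanishing Petersson inner product and the $\OO_p^\t$-invariance of the chosen newvector.
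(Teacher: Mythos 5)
Your proposal follows exactly the paper's own route: the paper's entire proof of this corollary is ``the proof is similar to Corollary~\ref{cor:degQgnk}, except that the characteristic function of the maximal order of the division quaternion algebra over $\Q_p$ is right $\G_0^{(1)}(p)$-invariant, not $SL_2(\Z_p)$-invariant, under the Weil representation action of $SL_2(\Q_p)$'' --- which is precisely your dual-lattice observation ($\OO_p^{\vee}=\v^{-1}\OO_p$ via the different) explaining why the $(3,1)$-entry tightens from $\Z_p$ to $p\Z_p$. One small slip to correct: when $\pi_p$ is supercuspidal of dimension $>1$, the transfer $\pi_p'$ has no $\OO_p^{\t}$-fixed vector (only characters factoring through the reduced norm do), so the local vector should be taken invariant under the relevant unit-filtration subgroup $1+\P^{n-1}$ of $\OO_p^{\t}$; this only affects the nonvanishing step, which the paper likewise leaves implicit.
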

\begin{proof}
The proof is similar to Corollary \ref{cor:degQgnk}.
However, notice that the characteristic function of the maximal order of the division quaternion algebra over $\Q_p$ is right $\G_0^{(1)}(p)$-invariant, not $SL_2(\Z_p)$-invariant, with respect to the action of $SL_2(\Q_p)$.
By this reason, $Kl_p'(n)$ is smaller than $Kl_p(n)$ as above.
\hspace*{\fill}$\square$
\end{proof}
Then, we apply above results to our aim.
For this, we need:
\begin{lem}\label{lem:supB}
The $v$-component $\phi_{1,v}$ at $v=2$ is supercuspidal and equivalent to $\rho_{2,2}$.
\end{lem}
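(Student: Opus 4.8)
The plan is to identify both local components as dihedral supercuspidal representations attached to the ramified quadratic extension $\Q_2(\sqrt{-1})/\Q_2$, and then to compare their inducing characters. Since $\phi_1=\pi(\mu)$ and $\rho_2=\pi(\mu^3)$ are the automorphic inductions of the Hecke characters $\mu,\mu^3$ of $\Q(\sqrt{-1})$, and $2$ is ramified in $\Q(\sqrt{-1})$ with a unique prime $\p$, the components at $v=2$ are the local dihedral representations $\pi(\mu_\p)$ and $\pi(\mu_\p^3)$ attached to $\Q_2(\sqrt{-1})/\Q_2$ and the local characters $\mu_\p,\mu_\p^3$ of $\Q_2(\sqrt{-1})^{\t}$. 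I will use two standard facts about the construction $\pi(\chi)$: $\pi(\chi)$ is irreducible supercuspidal if and only if $\chi\neq\chi^c$, where $c$ is the nontrivial automorphism of $\Q_2(\sqrt{-1})/\Q_2$; and, for $\chi\neq\chi^c$, one has $\pi(\chi_1)\simeq\pi(\chi_2)$ if and only if $\chi_2\in\{\chi_1,\chi_1^c\}$.

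For the supercuspidality I would first record the local structure. By the conductor--discriminant formula applied to $L(E)=L(\mu)$ (the conductor $32$ of the CM curve forces $N_{\Q(\sqrt{-1})/\Q}(\f_\mu)=8$), the character $\mu$ has conductor $\p^3$, so $\mu_\p$ is trivial on $1+\p^3\OO$ but not on $1+\p^2\OO$, where $\OO$ is the ring of integers of $\Q_2(\sqrt{-1})$ and $\p=(1+i)$. A direct computation gives $\OO^{\t}/(1+\p^3\OO)\simeq\Z/4\Z$, generated by $g=2+i=1+(1+i)$, on which $\mu_\p$ restricts to a character of exact order $4$ (as the conductor is exactly $\p^3$). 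Moreover $c(g)=2-i\equiv g^{-1}\pmod{\p^3}$, since $(2+i)(2-i)=5\equiv 1\pmod{\p^3}$; thus $c$ acts by inversion on $\OO^{\t}/(1+\p^3\OO)$, and already on the units $\mu_\p^c=\mu_\p^{-1}\neq\mu_\p$. Hence $\phi_{1,2}=\pi(\mu_\p)$ is supercuspidal, and the same computation (with $\mu_\p^3$ again of exact order $4$ and conductor $\p^3$) shows $\rho_{2,2}=\pi(\mu_\p^3)$ is supercuspidal.

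For the equivalence, by the second fact it suffices to prove $\mu_\p^3=\mu_\p^c$ as characters of $\Q_2(\sqrt{-1})^{\t}$. On the units this is automatic: $c$ acts by inversion and $\mu_\p$ has order $4$, so $\mu_\p^c=\mu_\p^{-1}=\mu_\p^3$ on $\OO^{\t}$. It therefore remains only to match the two characters on a uniformizer, i.e. to verify $\mu_\p(1-i)=\mu_\p(1+i)^3$. Using the local norm relation $\mu_\p\mu_\p^c=\chi_{-1,2}\circ N_{\Q_2(\sqrt{-1})/\Q_2}$ attached to the central character of $\pi(\mu)$ (here $\chi_{-1}$ denotes the quadratic character of $\Q(\sqrt{-1})/\Q$), this last point is equivalent to the single identity $\mu_\p^4=\chi_{-1,2}\circ N$, which I would check on $1+i$ from the explicit values of $\mu$ in the table on p.\,869--870 of \cite{vGS}, together with the fact that $N(1+i)=2$ is a local norm.

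The hard part is exactly this uniformizer computation. The unit part of $\mu_\p^3=\mu_\p^c$ is forced by the abstract group structure --- $c$ acts as inversion on a cyclic group of order $4$ --- and needs no information beyond the conductor. Matching the values at $1+i$, however, is a genuine computation depending on the precise normalization of the van Geemen--van Straten character $\mu$ and on the local arithmetic of the ramified extension $\Q_2(\sqrt{-1})/\Q_2$. I would carry it out by fixing the primary-generator normalization of $\mu$ at $\p$, computing $\mu_\p(1+i)$ and $\mu_\p(-i)=\mu_\p(g)$ explicitly, and verifying the resulting fourth-root-of-unity identity; this single check simultaneously certifies the supercuspidality and the equivalence $\phi_{1,2}\simeq\rho_{2,2}$.
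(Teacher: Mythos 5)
Your supercuspidality argument is correct, and it is more self-contained than the paper's (the paper deduces square-integrability of $\phi_{1,2}$ from its Jacquet--Langlands origin and rules out the special representation by comparing levels, whereas you verify $\mu_\p\neq\mu_\p^c$ on units directly). Your reduction of the equivalence to one identity at a uniformizer is also the right way to organize the question. The problem is precisely the step you defer: the identity $\mu_\p^4=\chi_{-1,2}\circ N$ is not a computation to be filled in later --- it is false, and its failure already follows from facts you establish. Write $i=\sqrt{-1}$, $\p=(1+i)$. Trivial central character gives $\mu_\p|_{\Q_2^\t}=\chi_{-1,2}$, hence $\mu_\p(2)=\chi_{-1,2}(2)=1$, since $2=N(1+i)$ is a norm. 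Combining this with $2=-i(1+i)^2$ in $\Q_2(i)$ yields
\[
\mu_\p(1+i)^2=\mu_\p(-i)^{-1},
\]
and you have shown that $\mu_\p(-i)$ is a primitive fourth root of unity (exact conductor $\p^3$). Hence $\mu_\p(1+i)$ is a primitive eighth root of unity, so $\mu_\p(1+i)^4=-1\neq 1=\chi_{-1,2}(N(1+i))$. Consequently $\mu_\p^3$ and $\mu_\p^c$ agree on $\o_{\p}^\t$ but differ at the uniformizer by $-1$: one has $\mu_\p^3=\mu_\p^c\,\eta$, where $\eta$ is the unramified quadratic character of $\Q_2(i)^\t$. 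By the classification fact you yourself invoke, $\pi(\mu_\p^3)\simeq\pi(\mu_\p)\ot\eta'$, with $\eta'$ the unramified quadratic character of $\Q_2^\t$, and this twist is \emph{not} isomorphic to $\pi(\mu_\p)$: a quadratic twist $\eta'\notin\{1,\chi_{-1,2}\}$ can fix $\pi(\mu_\p)$ only if $\mu_\p/\mu_\p^c=\eta'\circ N$, which is impossible because $\mu_\p/\mu_\p^c$ is ramified (its value at $i$ is $\mu_\p(i)^2=-1$) while $\eta'\circ N$ is unramified. So your approach, carried to completion, refutes rather than proves the equivalence.

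This is not a defect of your strategy alone; it exposes the same unresolved point in the paper's own proof. The paper argues that $\mu_\p=\ol{\mu}_\p^3$ on $\o_\p^\t$ together with triviality of the central character forces $\pi(\mu_\p)=\pi(\ol{\mu}_\p^3)$. But agreement on $\o_\p^\t$ and on $\Q_2^\t$ only determines a character on $\Q_2^\t\o_\p^\t$, which has index two in $\Q_2(i)^\t$; the residual ambiguity is exactly the unramified quadratic twist, and the computation above resolves it with the sign $-1$. Thus what actually holds is $\rho_{2,2}\simeq\phi_{1,2}\ot\eta'\not\simeq\phi_{1,2}$: the two local components are unramified quadratic twists of each other, not isomorphic. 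In particular the subsequent application of Corollary \ref{cor:degQgnk}, which requires the two members of the local pair to be \emph{equal}, does not apply as stated, so closing your gap would require genuinely different input (or a reformulation of the lemma and of the pair used in the Klingen-lift argument), not merely carrying out the deferred check.
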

\begin{proof}
As seen in \cite{O2}, $\phi_1$ is obtained by the Jacquet-Langlands lift.
Hence $\phi_{1,2}$ is square-integrable.
The level of special representations of $PGL_2(\Q_p)$ is $p$, $\phi_{1,2}$, of level 32, is not a special representation.
Hence the first claim.
Let $\p$ be the place of $\Q(\sqrt{-1})$ lying over $2$.
We normalize $|\mu_{\p}| =1$.
Then, $\mu_{\p}$ is $\{\pm 1, \pm\sqrt{-1} \}$-valued on $\o_{\p}^{\t}$.
Hence $\mu_{\p} = \ol{\mu}_{\p}^3$ on $\o_{\p}^{\t}$.
Since the central character of $\pi(\mu_{\p})$ is trivial, we have
\[
\phi_{1,2} = \pi(\mu_{\p}) = \pi(\ol{\mu}_{\p}^3)= \ol{\pi(\mu_{\p}^3)} = \ol{\rho}_{2,2} = \rho_{2,2}.
\]
This completes the proof.
\hspace*{\fill}$\square$
\end{proof}
Then we describe our final result in this section.
\begin{thm}
The globally generic weak endoscopic lift $\Th_2((\phi_1,\rho_2))$ (resp. $\Th_2((\chi_{-2}\phi_1,\chi_{-2}\rho_2))$)  of (cohomological) weight $(3,-1)$ has a right $\G(4,8)$-invariant vector.
Similarly, the Yoshida lift $\Th_2((\phi_1',\rho_2'))$ (resp. $\Th_2((\chi_{-2}\phi_1,\chi_{-2}\rho_2'))$) has a right $\G(4,8)$-invariant vector.
In particular, $F_5 \in \Th_2((\phi_1',\rho_2'))$ and $F_6 \in \Th_2((\chi_{-2}\phi_1,\chi_{-2}\rho_2'))$.
Clearly, $L(s,F_5;{\rm AE}) = L(s,\phi_1)L(s,\rho_2)$, $L(s,F_6;{\rm AE}) = L(s,\chi_{-2}\phi_1)L(s,\chi_{-2}\rho_2)$, outside of $\{2\}$.
\end{thm}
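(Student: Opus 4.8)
The plan is to reduce everything to a single local computation at $p=2$ and then to manufacture the $\G(4,8)$-invariant vector by a conjugation-and-averaging argument of exactly the type already used for $g_1$ and $g_4$ in Section 3. Since $\phi_1,\rho_2$ and their $\chi_{-2}$-twists are unramified outside $\{2\}$ (and the relevant quadratic fields ramify only at $2$), each of the four theta lifts factors as $\ot_v \th_2((\s_{1v},\s_{2v}))$ with an unramified, hence $GSp_4(\Z_p)$-fixed, vector at every $p \neq 2$. Thus it suffices to exhibit, inside the local component at $2$, a vector fixed by the open closure $\G(4,8)_2 \subset Sp_4(\Z_2)$. By Lemma \ref{lem:supB} the component $\phi_{1,2}$ is supercuspidal of level $32=2^5$ and equals $\rho_{2,2}$; twisting by $\chi_{-2}$ multiplies both components by the same $\chi_{-2,2}$ and so preserves this equality. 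Consequently, in every one of the four cases the two local data at $2$ coincide, and the local component at $2$ is a local \emph{Klingen} lift: for the globally generic $\Th_2((\phi_1,\rho_2))$ and $\Th_2((\chi_{-2}\phi_1,\chi_{-2}\rho_2))$ it is the local generic Klingen lift $\th_2((\pi_2,\pi_2))$, whereas for the Yoshida lifts $\Th_2((\phi_1',\rho_2'))$ and $\Th_2((\chi_{-2}\phi_1',\chi_{-2}\rho_2'))$ (built from a definite $B_{/\Q}$ ramified exactly at $\{\i,2\}$, so that $B_2$ is the division algebra) it is the local nongeneric Klingen lift $\th_2((\pi_2',\pi_2'))$.

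Next I would feed these identifications into Corollaries \ref{cor:degQgnk} and \ref{cor:degQngnk}. Letting $n$ be the order of the level at $2$ of the relevant (possibly $\chi_{-2}$-twisted) supercuspidal, Corollary \ref{cor:degQgnk} produces a right $Kl_2(n)$-invariant local $Q$-degenerate Whittaker function $W_{\psi_2}^Q$ of the generic Klingen lift with $W_{\psi_2}^Q(1) \neq 0$, while Corollary \ref{cor:degQngnk} produces a right $Kl_2'(n)$-invariant one for the nongeneric Klingen lift, again nonvanishing at the identity. These degenerate Whittaker models are the local objects that are genuinely easy to write down and that drive the whole argument.

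The remaining and decisive step is to convert $Kl_2(n)$- (resp. $Kl_2'(n)$-) invariance into $\G(4,8)_2$-invariance. Following the pattern of the $\K^{(1)}$ and $\K^{(4)}$ computations in Section 3, I would choose a suitable diagonal $g_0 \in GSp_4(\Q_2)$ so that the $g_0$-conjugate of $Kl_2(n)$ (resp. $Kl_2'(n)$) is adapted to $\G(4,8)_2$, translate $W_{\psi_2}^Q$ by $g_0$, and then average the translate over $\G(4,8)_2$. The nonvanishing of this average is forced by the $Q$-degenerate transformation law (\ref{eqn:degW}): on the Klingen unipotent part of $\G(4,8)_2$ the relevant coordinate $z$ lies in the congruence ideal $8\Z_2 \subset \Z_2$, where $\psi$ is already trivial, so the character $\psi(z)$ is trivial and the value at the identity survives; since $W_{\psi_2}^Q(1) \neq 0$ this yields a nonzero $\G(4,8)_2$-fixed vector. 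Assembling with the spherical vectors at $p \neq 2$ then gives a global right $\G(4,8) \t \prod_{p \neq 2} GSp_4(\Z_p)$-invariant vector in each of the four lifts.

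Finally, the Yoshida lifts $\Th_2((\phi_1',\rho_2'))$ and $\Th_2((\chi_{-2}\phi_1',\chi_{-2}\rho_2'))$ were already identified in \cite{O2} as containing $F_5$ and $F_6$; matching their $\G(4,8)$-fixed vectors (which are in particular $\G(2,4,8)$-fixed) against the $Sp_4(\Z)$-orbit decomposition of $S_3(\G(2,4,8))$ via the Hecke eigenvalues computed in \cite{vGS} reconfirms $F_5 \in \Th_2((\phi_1',\rho_2'))$ and $F_6 \in \Th_2((\chi_{-2}\phi_1',\chi_{-2}\rho_2'))$, while the generic lifts supply the promised non-holomorphic differential forms on the Siegel threefold attached to $\G(4,8)$. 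The equalities $L(s,F_5;{\rm AE}) = L(s,\phi_1)L(s,\rho_2)$ and $L(s,F_6;{\rm AE}) = L(s,\chi_{-2}\phi_1)L(s,\chi_{-2}\rho_2)$ outside $\{2\}$ are then immediate from the weak-endoscopic factorization $L_S(s,\Th_2((\s_1,\s_2));{\rm spin}) = L_S(s,\s_1)L_S(s,\s_2)$ recalled in Section 2. I expect the conjugation-and-averaging computation of the third paragraph to be the main obstacle: pinning down the correct $g_0$ and verifying that the $\G(4,8)_2$-average of the translated degenerate Whittaker function is genuinely nonzero, i.e. that no hidden cancellation of the kind that actually occurred in the $g_1$ computation destroys the value at the identity.
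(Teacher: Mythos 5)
Your proposal is correct and follows essentially the same route as the paper: reduce to the place $p=2$, use Lemma \ref{lem:supB} to identify the local component there as the generic (resp. nongeneric) Klingen lift, invoke Corollary \ref{cor:degQgnk} (resp. Corollary \ref{cor:degQngnk}) to get a $Kl_2(n)$- (resp. $Kl_2'(n)$-) invariant local $Q$-degenerate Whittaker function nonvanishing at $1$, and then conjugate and average over $\G(4,8)_2$ exactly as in the discussion preceding Theorem \ref{thm:g1orb}, with the $L$-function identities following from the weak endoscopic factorization. The step you flag as the main obstacle is precisely the one the paper also leaves as a "similar" computation, so your plan and the paper's proof coincide in both substance and level of detail.
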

\begin{proof}
Since the level of $\chi_{-2}\phi_1,\chi_{-2}\rho_2$ is 64, $\Th_2((\phi_1,\rho_2))$ has a right $Kl_2(6)$ invariant $W_{\psi_2}^Q$ by Corollary \ref{cor:degQgnk}.
$Kl_2(6)$ is isomorphic to the congruence subgroup 
\begin{eqnarray*}
\left[\begin{array}{cccc}
\Z_2 & 2\Z_2& 2^5\Z_2& 2^4\Z_2 \\
2^{-1}\Z_2 & \Z_2 & 2^4\Z_2&2^3\Z_2 \\
2\Z_2 & 2^2\Z_2 & \Z_2 &2^{-1}\Z_2 \\
2^2\Z_2 & 2^3\Z_2&2\Z_2 & \Z_2
\end{array}\right] \cap GSp_4(\Q_2).
\end{eqnarray*}
Then, by using the property of $W_{\psi_2}^Q$, one can construct a right $\G(4,8)$-invariant vector, similar to the discussion in front of Theorem \ref{thm:g1orb}.
The proof for the other weak endoscopic lifts are similar to this.
\hspace*{\fill}$\square$
\end{proof}
\begin{Rem}
If $\pi_p$ is a supercuspidal representation, then the local generic Klingen lift of $\pi_p$ is the constituent of a parabolically induced  representation which is denoted by $\tau(S,\pi_p)$ in \cite{Ro-Sch}.
On the other hand, the local nongeneric Klingen lift is $\tau(T,\pi_p)$.
\end{Rem}
\section{Automorphic forms on $SU_{2,2}(K_{\A})$}
In this section, we will consider the $\th$-correspondence for GSO(6) and GSp(4) and that for U(4) and U(2,2).
Let $V$ be an anisotropic six dimensional space defined over $\Q$ with discriminant $d_V$.
Let $K =\Q(\sqrt{-d_V})$.
Let  
\begin{eqnarray*}
U_{2m}(K) = \{g \in GL_{2m}(K) \mid ^{t}\ol{g}I_{2m}g = I_{2m} \},\ \ 
U_{m,m}(K) = \{g \in GL_{2m}(K) \mid ^{t}\ol{g}\eta_{m}g = \eta_{m} \}
\end{eqnarray*}
and $GU_{2m}(K), GU_{m,m}(K)$ be the similitude groups and $SU_{2m}(K) = SL_{2m}(K) \cap U_{2m}(K), SU_{m,m}(K) = SL_{2m}(K) \cap U_{m,m}(K)$.
Let $\s$ be an irreducible cuspidal automorphic representation of $GSO_V(\A)$.
By the Weil representation of $Sp_4 \t O_V$, we can consider the $\th$-lift of $\s$ to $GSp_4(\A)$, similar to section 2.
We denote the $\th$-lift of $\s$ by $\Th_{2}(\s)$. 
On the other hand, we can obtain an injective isomorphism $J$ of $GU_4(K)$ to $GSO_V$ from Theorem 16 of \cite{Knus}. 
In particular, $J: SU_{4}/\{\pm 1\} \simeq SO_V$.
By $J$, we can obtain an automorphic representation of $GU_4(K_{\A})$.
We denote it also by $\s$.
Then, by the Weil representation of $U_4(K) \t U_{2,2}(K)$, we can consider the $\th$-lift of $\s|_U$ to $U_{2,2}(K_{\A})$ and denote the $\th$-lift by $\Th_{2,2}(\s|_U)$. 
First, we show the following.
\begin{prop}
Let $\s \in \I(GSO_V(\A))$ be cuspidal.  
If $\Th_2(\s) \neq 0$, then  $\Th_{2,2}(\s|_U) \neq 0$. 
\end{prop}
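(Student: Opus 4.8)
The plan is to prove the nonvanishing of the unitary theta lift $\Th_{2,2}(\s|_U)$ by relating it, via the accidental isogeny $J$, to the already-given orthogonal theta lift $\Th_2(\s)$ to $GSp_4(\A)$. The key structural input is the isomorphism $J: SU_4/\{\pm 1\} \simeq SO_V$ from Theorem 16 of \cite{Knus}, which lets me regard a single cuspidal $\s$ simultaneously as an automorphic representation of $GSO_V(\A)$ and of $GU_4(K_\A)$. The two theta lifts in question, $\Th_2(\s)$ (to $GSp_4$, using the dual pair $Sp_4 \times O_V$) and $\Th_{2,2}(\s|_U)$ (to $U_{2,2}$, using the dual pair $U_4 \times U_{2,2}$), are both built from essentially the same automorphic data $\s$ restricted to the relevant isometry group.

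First I would set up both theta integrals explicitly as in the integral \eqref{eqn:integal}, writing $\th_2(\vp,f)$ for the orthogonal lift and the analogous unitary integral $\th_{2,2}(\vp',f)$ for a Schwartz function $\vp'$ on the appropriate hermitian space and the same cusp form $f \in \s$. Since $SU_4$ is (up to center) identified with $SO_V$ under $J$, the domain of integration $SO_V(\Q)\bs SO_V(\A)$ matches $SU_4(K)\bs SU_4(K_\A)$ up to the harmless central factor $\{\pm 1\}$, so the restricted function $f|_{SU_4}$ is the common ingredient. The goal is then to show that nonvanishing of the orthogonal lift forces nonvanishing of the unitary lift.

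Next I would argue via a local-global seesaw (or Rallis-tower) comparison. The natural tool is a seesaw dual pair relating $(O_V, Sp_4)$ to $(U_4, U_{2,2})$ through the common group $SU_4/\{\pm1\}\simeq SO_V$; restricting the relevant Weil representations and using the seesaw identity, one converts an inner product of orthogonal theta lifts into an inner product of unitary theta lifts paired against the same $\s$. Concretely, I would choose Schwartz data $\vp'$ on the hermitian lattice so that its restriction or partial Fourier transform matches the Schwartz data $\vp$ witnessing $\Th_2(\s)\neq 0$; nonvanishing of $\th_2(\vp,f)$ then yields a nonzero period integral of $f$ against a theta kernel that, by the seesaw, equals a period computing $\langle \th_{2,2}(\vp',f),\ \th_{2,2}(\vp',f)\rangle$ or a comparable pairing, forcing $\Th_{2,2}(\s|_U)\neq 0$. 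Since $V$ is anisotropic, $\s$ is cuspidal and the integrals converge absolutely, so there are no boundary or regularization subtleties.

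The hard part will be making the seesaw comparison precise at the level of Schwartz-Bruhat data and the Weil constants: the orthogonal and unitary Weil representations are defined with respect to $\psi$ but with different splitting characters ($\chi_X = \{*,d_X\}_v$ in the orthogonal case versus a hermitian character for $U_4\times U_{2,2}$), and I must check these are compatible under $J$ so that the restricted representations genuinely agree. I expect the main obstacle to be verifying that the identification $J$ intertwines the two Weil representations up to the central $\{\pm 1\}$ and an explicit transfer of Schwartz data, rather than any analytic difficulty; once that matching is established, nonvanishing transfers essentially formally from $\Th_2(\s)\neq 0$ to $\Th_{2,2}(\s|_U)\neq 0$.
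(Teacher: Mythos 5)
Your proposal hinges on a seesaw identity relating the dual pairs $(Sp_4,O_V)$ and $(U_4,U_{2,2})$ ``through the common group $SU_4/\{\pm 1\}\simeq SO_V$,'' and that is precisely where it breaks down: no such seesaw exists. A seesaw requires both dual pairs to sit inside one ambient symplectic group with containments going opposite ways, and here they do not. The pair $(Sp_4,O_V)$ lives in the symplectic group of the $24$-dimensional $\mathbb{Q}$-space $W\otimes_{\mathbb{Q}}V$, while $(U_4,U_{2,2})$ lives in the symplectic group of the $32$-dimensional $\mathbb{Q}$-space underlying the tensor product of the two Hermitian spaces over $K$. The map $J$ of Knus is the exceptional isogeny $A_3=D_3$: $SU_4$ is a double cover of $SO_V$, with $V$ realized essentially as a $\mathbb{Q}$-form of $\Lambda^2_K V_U$, not as a subspace or quotient of $V_U$. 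Consequently $U_4$ is \emph{not} a subgroup of $O_V$, the two Weil representations act on Schwartz spaces over $\mathbb{Q}$-vector spaces of dimensions $12$ and $16$ respectively, and there is no ``restriction or partial Fourier transform'' carrying $\vp'\in\mathcal{S}(V_U(K_{\mathbb{A}})^2)$ equivariantly to $\vp\in\mathcal{S}(V(\mathbb{A})^2)$: pulled back through $J$, the group acts through the $\Lambda^2$ representation in one case and the standard representation in the other, so the intertwining you defer as ``the hard part'' is not a verification to be carried out but a statement that is false. (The seesaw that genuinely exists for $Sp_4\subset U_{2,2}$ pairs $U_4$ with the orthogonal group of the \emph{eight}-dimensional trace form on $V_U$ viewed over $\mathbb{Q}$, not with the six-dimensional $O_V$; invoking it would force you to relate $\sigma$ to an automorphic representation of that $O_8$, which your proposal never addresses.)

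The paper never compares the two Weil representations at all; it transfers nonvanishing through Fourier coefficients, i.e.\ Bessel-type periods, which is the only level at which $J$ can honestly be used. If $\Theta_2(\sigma)\neq 0$, some Fourier coefficient $F_T(1)$ is nonzero, and the theta integral collapses it to a period of $f\in\sigma$ over the pointwise stabilizer $Z_{x_1,x_2}$ of two vectors in $V(\mathbb{Q})$ with Gramian $T$, an orthogonal group of rank four. Via $J$, a unitary group $U_W\simeq U_2(K)$ (the stabilizer of a pair $y_1,y_2\in V_U$) sits inside $Z_{x_1,x_2}$, and one obtains a nonvanishing $U_W$-period of $f$. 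That $U_W$-period is exactly the integral computing the Hermitian Fourier coefficient $F'_Y(1)$ of a form in $\Theta_{2,2}(\sigma|_U)$, which is then made nonzero by an appropriate choice of the Schwartz function $\phi$ (citing Yoshida). So the bridge is built from stabilizer subgroups and period integrals on the $O(4)$/$U(2)$ level, not from any matching of theta kernels; your argument would need to be restructured along these lines to close the gap.
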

\begin{proof}
Let $r^2 = \ot_v r_v^2$ be the Weil representation of Sp(2) $\t$ $O_V$.
Take a nonzero $F \in \Th_2(\s)$.
The Fourier coefficient $F_T$ associated to $T ={}^tT >0$ is written as 
\begin{eqnarray*}
F_T(1) = \int_{Z_{x_1,x_2}(\A) \bs SO_V(\A)} r^2(1,h)\vp(x_1,x_2) f(h) dh
\end{eqnarray*} 
by some $f \in \s$ and a pair of $x_1,x_2 \in V(\Q)$ whose Gramian is $T$.
Here $Z_{x_1,x_2} \subset SO_V$ indicates the pointwise stabilizer subgroup of $x_1,x_2$, and $\vp$ is a Schwartz-Bruhat function of $V(\A)^2$.
Hence,
\begin{eqnarray*}
\int_{Z_{x_1,x_2}(\Q) \bs Z_{x_1,x_2}(\A)} f(zh) dz \neq 0
\end{eqnarray*}
for some $h \in SO_V(\A)$.
Notice that $Z_{x_1,x_2}$ is isomorphic to an orthogonal group of rank four.
Hence, by the isomorphism $J$, there is a subgroup $U_W \simeq U_2(K)$ embedded into $Z_{x_1,x_2}(\A) \simeq O_4(\A)$, such that
\begin{eqnarray*}
\int_{U_W(K) \bs U_W(K_{\A})} f(zh) dz \neq 0.
\end{eqnarray*}
Let $V_U$ be the Hermitian vector space on which $U_4(K)$ acts.
Notice that $U_W$ stabilizes a pair $y_1,y_2 \in V_U$ such as
\[
Y:= \left[\begin{array}{cc}
\la y_1,y_1 \ra & \la y_1,y_2 \ra \\
\la y_2,y_1 \ra & \la y_2,y_2 \ra
\end{array}\right] > 0,
\]
where $\la *,* \ra$ denotes the Hermite form of $V_U$.
Similar to $F \in \Th_2(\s)$, also for $F' \in \Th_{2,2}(\s|_U)$, we write 
\begin{eqnarray*}
F'_Y(1) = \int_{U_W(K) \bs U_4(K)} r_U(1,h)\phi(y_1,y_2) f(h) dh
\end{eqnarray*}
where $r_U$ is the Weil representation of $U_{2,2} \t U_4$ and $\phi \in V_U(K_{\A})^2$.
It is possible to choose $\phi$ so that $F_{Y}'(1) \neq 0$ (c.f. Concluding Remarks in \cite{Y2}).
Hence the assertion.
\hspace*{\fill}$\square$
\end{proof}
By this proposition, we find the existence of $\wt{F}$ in Theorem \ref{thm:main2}.
Let $S_{\s}$ be the finite set of places of $\Q$ consisting of \\
(i) the archimedean place and all places $v$ where $K_v/\Q_v$ is ramified.\\
(ii) all finite places at which $\s_v$ is ramified.\\
By Kudla \cite{Kudla}, we calculate the standard Langlands $L$-function of $\s|_{SO_V}$ (of degree six) is equal to $\zeta(s)L(s,F_5,\chi_{K};r_5)$ outside of $S_{\s}$.
Here $L(s,F_5,\chi_{K};r_5)_p$ is the $\chi_{K}$-twist of $L(s,F_5;r_5)$, as usual.
It is easy to see that 
\[
L_{S_{\s}}(s,\Th_{2,2}(\s|_U)|_{SO_X}) = L_{S_{\s}}(s,\s|_{SO_X}).
\]
Hence (\ref{eqn:zetaofwtF}).
Finally, we show the last assertion of the Theorem.
For this, we consider the standard Langlands $L$-function of a noncuspidal automorphic representation $\tau$ of $SO_V(\A)$.
Recall the Siegel operator $\Phi$ for Hermitian modular forms on $SU_{2,2}(K_{\A})$.
Let
\begin{eqnarray*}
N_{1} &=& \Bigg\{\left[\begin{array}{cccc}
1 &  & v & w \\
 & 1 & \ol{w} &  \\
 &  & 1 &  \\
 &  &  & 1
\end{array}\right]
\left[\begin{array}{cccc}
1 & u &  &  \\
 & 1 &  &  \\
 &  & 1 &  \\
 &  & -u  & 1
\end{array}\right] \mid v \in \Q, u, w \in K \Bigg\}, \\ 
M_1 &=& \{\left[\begin{array}{cccc}
z^{c}\a & & z^{c}\b& \\
 & t^{-1}z& & \\
z^{c}\g & & z^{c}\d & \\
 &  & &t z
\end{array}\right] \mid \left[\begin{array}{cc}
\a & \b\\
\g & \d
\end{array}\right] \in SU_{1,1}(K), z \in K^1, t \in \Q^{\t} \}
\end{eqnarray*}
where $K^1 = \{z \in K^{\t} \mid N_{K/\Q}(z) = 1 \}$. 
We embed $SU_{1,1}(K) \t K^1 \t \Q^{\t}$ into $M_1$, naturally.
A holomorphic Hermitian modular form $F$ is a noncuspform, if and only if 
\begin{eqnarray*}
\Phi(F)(g,t,z;h) = {\rm vol}(N_{1}(k) \bs N_{1}(\A))^{-1}\int_{N_{1}(K) \bs N_{1}(K_{\A})}F(n (g,t,z)h) d n 
\end{eqnarray*}
is not a zero function of $(g,t,z)$ at some $h \in SU_{2,2}(K_{\A})$ (c.f. \cite{Krieg}).
Then, let $E$ be a holomorphic noncuspform of weight $\k$.
Imitating the method of Zharkovskaya \cite{Zhar}, or observing each local Jacquet module with respect to $N_1$ at a good place, we can write, outside of finitely many bad places, 
\begin{eqnarray}
L(s,E) = L(s,\s_1)L(s,\s_1,\chi)L(s,\nu), \label{eqn:Lind}
\end{eqnarray}
by certain $\s_1 \in \I(GL_2(\A))$, $\nu \in \I(O_{K}(\A))$, and automorphic character $\chi$ of $\A^{\t}$ of weight $\k-3$.
However, if $F \in \Th_{2}(\s)$ satisfies the generalized Ramanujan conjecture at a good place $p$, then the $L$-function $L(s,F;r_5)$ of degree five is in the form:
\[
\big((1-p^{-s})(1-a_p p^{-s})(1-a_p^{-1} p^{-s})(1-b_p p^{-s})(1-b_p^{-1} p^{-s}) \big)^{-1}
\]
with $|a_p| = |b_p| =1$.
Since every automorphic form of $\Th_{2,2}(\s|_U)$ is holomorphic of weight $4$, comparing with (\ref{eqn:Lind}) for $\k = 4$, we obtain the last assertion.


\end{document}